\newtheorem{prop}{Proposition}
\newtheorem{theo}[prop]{Theorem}
\newtheorem*{theo*}{Theorem}
\newtheorem{lemm}[prop]{Lemma}
\newtheorem*{claim}{Claim}
\newtheorem{rema}[prop]{Remark}
\newtheorem*{Ackn}{Acknowledgements}
\theoremstyle{definition}
\newtheorem{defi}[prop]{Definition}
\newcommand{\Rmnum}[1]{\expandafter\@slowromancap\romannumeral #1@}
\newcommand{\RR}{\mathbf{R}}
\newcommand{\cA}{\mathcal A}
\newcommand{\cF}{\mathcal F}
\newcommand{\cH}{\mathcal H}
\newcommand{\cO}{\mathcal O}
\newcommand{\bB}{\mathbb{B}}
\newcommand{\bR}{\mathbb{R}}
\newcommand{\bS}{\mathbb{S}}
\DeclareMathOperator{\diam}{diam}
\DeclareMathOperator{\Vol}{Vol}
\DeclareMathOperator{\Area}{Area}
\DeclareMathOperator{\Ric}{Ric}
\DeclareMathOperator{\secondfund}{II}
\newcommand{\bangle}[1]{\left\langle #1 \right\rangle}
\title[volume comparison with boundary]{A spectral volume comparison for manifolds with weakly convex boundary}
\author{Jia Li}
\address{School of Mathematical Sciences\\
		Xiamen University\\
		361005, Xiamen, P.R. China}
\email{lijiamath@stu.xmu.edu.cn}
\thanks{2020 Mathematics Subject Classification: 53C21, 53C24.}
\thanks{Key words and phrases: diameter estimate, volume comparison, $\mu$-bubble, isoperimetric profile.}
\begin{document}
\begin{abstract}
We establish the Bonnet-Myers theorem and the Bishop-Gromov volume comparison theorem in the spectral sense for manifolds with weakly convex boundary. For $n\geq 3$, let $(M^n,g)$ be a simply connected compact smooth  $n$-manifold with weakly convex boundary $\partial M$. If there exists a positive function $w\in C^{\infty}(M)$ that satisfies:
\begin{equation*}
  \begin{cases}
    -\frac{n-1}{n-2}\Delta w+\Lambda_{\Ric} w\geq (n-1)w, \enspace in \enspace M,\\
    \frac{\partial w}{\partial \eta}=0, \enspace\enspace\enspace\enspace \enspace\enspace\enspace\enspace\enspace\enspace\enspace\enspace\enspace\enspace\enspace\enspace  \enspace\enspace \enspace\enspace on \enspace\partial M,
  \end{cases}
\end{equation*}
where $\Lambda_{\Ric}$ denotes the smallest eigenvalue of the Ricci tensor, $\eta$ is the unit co-normal vector field of $\partial M$ in $M$, then the diameter of $M$ satisfies $\diam(M)\leq (\frac{\max w}{\min w})^{\frac{n-3}{n-1}}\pi$.\par If, in addition, $w$ attains its minimum on the boundary $\partial M$, we obtain a sharp upper bound for the volume of $M$: $\Vol(M)\leq \Vol(\bS^n_{+})$, with equality holding if and only if $M^n$ is isometric to the unit round hemisphere $\bS^{n}_{+}$.
\end{abstract}
\maketitle
\section{Introduction}
A classical Bonnet-Myers theorem states that if a complete $n$-dimensional Riemannian manifold $M^n$ has Ricci curvature at least $(n-1)c$, then the diameter of $M$ is at most $\frac{\pi}{\sqrt{c}}$, where $c\in\RR$ is a positive constant. Moreover,  for a fixed $p\in M$, let $B_{p}(r)$ be the geodesic ball of radius $r$ centered at $p$ in $M$, and denote $B^{c}(r)$ by the geodesic ball of radius $r$ centered at the origin in the space form of constant sectional curvature $c$. It follows from the Bishop-Gromov volume comparison theorem that the ratio $\frac{\Vol(B_{p}(r))}{\Vol(B^{c}(r))}$ is non-increasing  for any $r\in(0,\infty)$, and the equality holds if and only if $M$ is isometric to $\bS^{n}(\frac{1}{\sqrt{c}})$. Cheng \cite{Cheng-maxdiam} proved that if the diameter of $M$, denoted by $\diam(M)$, is equal to $\frac{\pi}{\sqrt{c}}$, then $M$ must be isometric to the $n$-sphere of constant sectional curvature $c$.

Recently,~Antonelli-Xu \cite{Antonelli-Xu} established a sharp and rigid spectral generalization of both the Bonnet-Myers theorem and the Bishop-Gromov volume comparison theorem, which are precisely stated as follows.
\begin{theo}[\cite{Antonelli-Xu}]\label{theo-1}
 Let $M^n$, $n\geq 3$, be an $n$-dimensional simply-connected, compact smooth manifold, and let $0\leq \theta\leq \frac{n-1}{n-2}$, $\lambda>0$. Let $\Lambda_{\Ric}$ 
 be the smallest eigenvalue of the Ricci tensor. Assume there is a positive function $w\in C^{\infty}(M)$ such that:
  $$\theta\Delta_{M}w\leq \Lambda_{\Ric} w-(n-1)\lambda w,$$
Then we have:
\begin{itemize}
  \item A diameter bound $$\diam(M)\leq\frac{\pi}{\sqrt{\lambda}}(\frac{\max{w}}{\min{w}})^{\frac{n-3}{n-1}\theta}.$$
  \item A sharp volume bound $$\Vol({M})\leq\lambda^{-\frac{n}{2}}\Vol(\bS^{n}).$$ Moreover, if equality holds, then every function $w$ is constant, and ${M}$ is isometric to the round sphere of radius $\lambda^{-\frac{1}{2}}$.
\end{itemize}
\end{theo}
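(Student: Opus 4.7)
The overall plan is to combine Klartag's $L^{1}$ needle-decomposition localization with a reweighting of the volume form by a power of $w$, so that the spectral hypothesis becomes a sharp one-dimensional comparison along each needle. The condition $\theta\Delta w \leq \Ric\, w - (n-1)\lambda w$ rearranges pointwise to $\Ric \geq (n-1)\lambda + \theta(\Delta w)/w$, which is a Ricci lower bound corrected by a Hessian-trace term; along any geodesic this correction can be absorbed into a derivative of $\log w$ once $w$ is raised to the correct power $\sigma=\sigma(n,\theta)$ and treated as a density.

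\textbf{Diameter bound and finiteness of $\pi_1$.} First lift everything to the universal cover $\tilde{M}$, which inherits the hypothesis pointwise. Along a unit-speed minimizing geodesic $\gamma:[0,L]\to\tilde{M}$, the Riccati inequality $u'+u^{2}/(n-1)+\Ric(\dot\gamma,\dot\gamma)\leq 0$ for $u=(n-1)(\log h)'$, combined with the spectral hypothesis, yields after rewriting in terms of $(w\circ\gamma)$ a Sturm comparison with a spherical model whose effective radius is dilated by a factor $(\max w/\min w)^{\frac{n-3}{n-1}\theta}$; the exponent arises from balancing the orthogonal versus tangential contributions to $\Delta w$ when distributing the weight $w^{\sigma}$. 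This gives $L\leq \frac{\pi}{\sqrt{\lambda}}(\max w/\min w)^{\frac{n-3}{n-1}\theta}$, whence $\tilde{M}$ has bounded diameter. By Hopf--Rinow, $\tilde{M}$ is compact, and consequently $\pi_{1}(M)$ is finite.

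\textbf{Volume bound.} Apply the $L^{1}$-needle decomposition associated to the $1$-Lipschitz map $x\mapsto\dist_{\tilde g}(x,p)$, for a fixed $p\in\tilde{M}$, with reference measure $w^{\sigma}\vol_{\tilde g}$. This disintegrates the weighted volume into one-dimensional measures $h_{\alpha}(t)\,dt$ supported on geodesic needles $\gamma_\alpha$. The spectral hypothesis, projected onto $\dot\gamma_{\alpha}$, produces a differential inequality for $h_{\alpha}$ which forces it to be pointwise dominated by the Jacobian of a geodesic ray on $\bS^{n}(\lambda^{-1/2})$. Integrating the per-needle bounds over the disintegration, and factoring out the $w$-weight using $(\min w)^{\sigma}\leq w^{\sigma}\leq(\max w)^{\sigma}$, gives $\Vol(\tilde M)\leq\lambda^{-n/2}\Vol(\bS^{n})$.

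\textbf{Rigidity and main obstacle.} In the equality case every per-needle estimate is saturated, the disintegrated density coincides with the spherical one, and $w$ is forced to be constant along each needle; letting $p$ vary then forces $w$ globally constant and $\tilde{M}$ isometric to $\bS^{n}(\lambda^{-1/2})$. The principal technical obstacle is pinning down the exponent $\sigma=\sigma(n,\theta)$ so that the corrected Riccati inequality matches the $n$-dimensional spherical model \emph{exactly}, rather than producing an effective Bakry--\'Emery dimension $N>n$ with a weaker comparison. The restriction $\theta\leq\frac{n-1}{n-2}$ is precisely the regime in which this algebraic match is possible, with the endpoint $\theta=\frac{n-1}{n-2}$ corresponding to the Yamabe-type conformal weight; outside this range the weighted model is strictly weaker than $\bS^{n}$, so the sharp volume bound would no longer hold.
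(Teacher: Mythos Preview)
The paper itself does not prove Theorem~\ref{theo-1}; it is quoted from \cite{Antonelli-Xu}. The method of \cite{Antonelli-Xu}, mirrored in Sections~3--4 of this paper for the boundary case, is \emph{not} a geodesic or needle argument: the diameter bound is obtained by testing the second variation of a weighted $\mu$-bubble functional $\cA(\Omega)=\int_{\partial^{*}\Omega}w^{\theta}-\int_{\Omega}hw^{\alpha}$ with $\varphi=w^{-\theta}$, and the volume bound comes from a differential inequality $I''I\leq -\frac{(I')^{2}}{n-1}-(n-1)\lambda$ for the weighted isoperimetric profile, again proved by a second-variation computation on hypersurfaces.

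Your proposal takes a genuinely different route (Klartag localization along geodesic needles), but there is a real gap, not just an exponent to be tuned. The spectral hypothesis gives the \emph{scalar} bound $\Ric(e,e)\geq (n-1)\lambda+\theta\,\Delta w/w$ for every unit $e$, and $\Delta w$ is the full trace $\nabla^{2}w(\dot\gamma,\dot\gamma)+\tr_{\dot\gamma^{\perp}}\nabla^{2}w$. Restricting to a needle $\gamma$ you only see $(w\circ\gamma)''=\nabla^{2}w(\dot\gamma,\dot\gamma)$; the orthogonal trace $\tr_{\dot\gamma^{\perp}}\nabla^{2}w$ is invisible to the needle and is not controlled by $w\circ\gamma$, its derivatives, or the needle density $h_{\alpha}$. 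No choice of weight $w^{\sigma}$ turns $\Delta w/w$ into a one-dimensional object, because the missing term is a genuinely transverse second derivative, not a first derivative that could be absorbed into $(\log w^{\sigma})'$. Thus the step ``along any geodesic this correction can be absorbed into a derivative of $\log w$'' does not go through, and neither the Riccati comparison for the diameter nor the per-needle density bound for the volume can be closed.

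This is exactly why the hypersurface approach is used instead: on a hypersurface $\Sigma$ one has the decomposition $\Delta_{M}w=\Delta_{\Sigma}w+H\langle\nabla w,\nu\rangle+\nabla^{2}w(\nu,\nu)$, and the dangerous tangential piece $\Delta_{\Sigma}w$ is eliminated by an \emph{integration by parts} over $\Sigma$ after multiplying by $w^{-\theta}$ (cf.\ the manipulation following Lemma~\ref{sec-area} and in Lemma~\ref{lemm4}). That integral identity has no one-dimensional analogue along a needle. The constraint $0\leq\theta\leq\frac{n-1}{n-2}$ enters afterwards, as the range in which the leftover quadratic form in $X=hw^{\alpha-\theta}$ and $Y=w^{-1}\langle\nabla w,\nu\rangle$ has the right sign once $\alpha=\frac{2\theta}{n-1}$; it is not, as you suggest, the condition allowing a Bakry--\'Emery dimension match.
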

\begin{rema}
When $\theta=0$, Theorem 1 reduces to the Bonnet-Myers theorem and the Bishop-Gromov volume comparison theorem. 
When $n=3$, the above theorem has been established in \cite[Theorem 5.1]{Chodosh-Li-Minter-Stryker-5bernstein}, which can be used to solve the stable Bernstein problem in $\bR^5$.
\end{rema}

Corresponding to the volume comparison and rigidity results for manifolds without boundary, Hang-Wang \cite{hang-wang2009} considered compact manifolds with boundary and positive Ricci curvature, established a boundary version of rigidity results as follows:
\begin{theo}[\cite{hang-wang2009} ]
  Let $(M^n,g)$, $n\geq 2$, be a compact $n$-dimensional smooth manifold with non-empty boundary $\partial M$. Suppose that
   $\Lambda_{\Ric}\geq n-1$, 
     $(\partial M,g|_{\partial M})$ is isometric to the standard sphere $\bS^{n-1}\subset\bR^{n}$, and $\partial M$ is weakly convex in the sense that its second fundamental form A is non-negative.
  Then $(M^n,g)$ is isometric to the hemisphere $\bS^{n}_{+}$.
\end{theo}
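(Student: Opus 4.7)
The plan is to mimic Obata's rigidity characterization of the round sphere, adapted to the boundary setting via Reilly's formula. The key idea is to produce $n$ ``spherical coordinate'' functions on $M$ satisfying the Hessian equation $\nabla^{2} u = -u\,g$ and then assemble them into an isometric embedding of $M$ into the closed hemisphere $\bS^n_+\subset\bR^{n+1}$.

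First I would identify $\Sigma$ with the standard $\bS^{n-1}\subset\bR^n$ and, for each ambient coordinate $x_i$, $i=1,\dots,n$, solve the Dirichlet problem
\[
\Delta u_i + n\, u_i = 0 \text{ in } M, \qquad u_i = x_i \text{ on } \Sigma.
\]
Solvability is delicate because $\lambda=n$ is precisely the critical value permitted by the Lichnerowicz--Reilly lower bound on the first Dirichlet eigenvalue under $\Ric\geq(n-1)g$; in practice one either shows that $n$ is not a Dirichlet eigenvalue unless $M$ is already a hemisphere, or solves the perturbation $\Delta u+(n-\eps)u=0$ and passes to the limit as $\eps\to 0$.

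Next I would apply Reilly's formula
\[
\int_M \bigl[(\Delta u)^2 - |\nabla^{2} u|^2 - \Ric(\nabla u,\nabla u)\bigr]\,dV = \int_\Sigma \bigl[2 u_\eta\, \Delta_\Sigma u + H\, u_\eta^2 + A(\nabla^\Sigma u,\nabla^\Sigma u)\bigr]\,dA
\]
to each $u_i$. Using $\Delta u_i=-n\, u_i$, the pointwise trace inequality $|\nabla^{2} u|^2\geq(\Delta u)^2/n$, $\Ric\geq(n-1)g$, and integration by parts to handle $\int_M\Ric(\nabla u,\nabla u)$, the bulk side is bounded above in terms of $-(n-1)\int_\Sigma u_i\,(u_i)_\eta$. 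The boundary identity $\Delta_{\bS^{n-1}} x_i=-(n-1)x_i$ then cancels exactly with the term $2 u_\eta\,\Delta_\Sigma u_i$, so Reilly's equation collapses to
\[
0 \leq \int_\Sigma \bigl[H\,(u_i)_\eta^2 + A(\nabla^\Sigma u_i,\nabla^\Sigma u_i)\bigr]\,dA,
\]
which holds automatically by weak convexity $A\geq 0$ (and hence $H\geq 0$). Summing over $i$ and invoking $\sum_i x_i^2 \equiv 1$ on $\Sigma$ forces every intermediate inequality to be an equality. Saturating $|\nabla^{2} u_i|^2\geq(\Delta u_i)^2/n$ yields the Obata equation $\nabla^{2} u_i=-u_i g$ throughout $M$, and the boundary equality moreover forces $(u_i)_\eta\equiv 0$ on $\Sigma$. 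To finish, I would define $u_0:=\sqrt{1-\sum_i u_i^2}$ and verify using the Obata equations that $u_0$ is smooth, nonnegative, vanishes exactly on $\Sigma$, and also satisfies $\nabla^{2} u_0=-u_0\, g$; then $\Phi:=(u_1,\dots,u_n,u_0):M\to\bR^{n+1}$ lands in $\bS^n$ with $\Phi(\Sigma)=\bS^{n-1}\times\{0\}$, is a local isometry by a direct Hessian computation, and hence a Riemannian covering onto $\bS^n_+$, which must be an isometry since $\bS^n_+$ is simply connected.

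The main obstacle I expect is twofold: handling the resonant Dirichlet problem at the borderline eigenvalue $\lambda=n$, where existence and uniqueness of the $u_i$ can fail in the rigid case and must be recovered by perturbation and compactness; and, perhaps more subtly, extracting the full boundary equality conditions from Reilly's formula to establish $(u_i)_\eta\equiv 0$ and thereby justify the construction of $u_0$. Since weak convexity gives only $A\geq 0$ and $H\geq 0$, the analysis must treat the degenerate locus $\{H=0\}\cup\ker A$ on $\Sigma$ with care, using the Obata equation on $M$ to propagate rigidity from a potentially small set of the boundary into the interior.
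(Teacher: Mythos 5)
This theorem is Hang--Wang's result, which the paper cites from \cite{hang-wang2009} without reproducing the proof, so I am comparing your proposal to the known argument rather than to anything in the present paper. Your overall strategy---extend the ambient coordinates $x_i$ from $\bS^{n-1}$ to solutions of $\Delta u_i+nu_i=0$, run Reilly's formula on each $u_i$, force the Obata equation $\nabla^2 u_i=-u_i\,g$, and then build the isometry to $\bS^n_+$---is indeed in the right spirit, and the preliminary reductions you describe (dichotomy on whether $n$ is the first Dirichlet eigenvalue; perturbing $n-\eps$) are standard and correct.

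However, there is a concrete gap at the step where you claim that the boundary terms ``cancel exactly'' and that ``summing over $i$ and invoking $\sum_i x_i^2\equiv 1$ forces every intermediate inequality to be an equality.'' Carrying the computation through carefully: with $\Delta u_i=-nu_i$, the trace inequality $|\nabla^2 u_i|^2\ge (\Delta u_i)^2/n$, the curvature bound $\Ric\ge(n-1)g$, and the integration by parts $\int_M|\nabla u_i|^2=n\int_M u_i^2+\int_\Sigma u_i(u_i)_\eta$, the left side of Reilly's formula is bounded above by $-(n-1)\int_\Sigma u_i(u_i)_\eta$, while on the right side the term $2(u_i)_\eta\Delta_\Sigma u_i=-2(n-1)u_i(u_i)_\eta$ is \emph{twice} as large in magnitude, leaving a residual. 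What one actually obtains is
\begin{equation*}
(n-1)\int_\Sigma u_i\,(u_i)_\eta \;\ge\; \int_\Sigma\bigl[H\,(u_i)_\eta^2+A(\nabla^\Sigma u_i,\nabla^\Sigma u_i)\bigr]\;\ge\;0,
\end{equation*}
and this last inequality holds trivially by $A\ge 0$; it gives you no information. Summing over $i$ and using $\sum_i A(\nabla^\Sigma x_i,\nabla^\Sigma x_i)=H$ yields $(n-1)\int_\Sigma\tfrac12\,\partial_\eta\bigl(\sum_i u_i^2\bigr)\ge\int_\Sigma H\bigl[1+\sum_i(u_i)_\eta^2\bigr]$, but the left side is the integral of the \emph{normal derivative} of $\sum_i u_i^2$, which is not determined by the boundary values $\sum_i x_i^2\equiv 1$. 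You need an independent argument (a reverse inequality, a Pohozaev-type identity, a maximum-principle bound such as $\sum_i u_i^2\le 1$ in $M$, or information from the case $\lambda_1^D(M)>n$ that you set aside) to force the equality case; as written the inequalities are all one-sided and consistent with non-rigid geometries. The secondary difficulty you flag---extracting $(u_i)_\eta\equiv 0$ on a possibly degenerate set where $H=0$ and $A$ has kernel---is real, but it is downstream of the main gap: you never get to the equality case in the first place.
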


 It is natural to investigate whether Theorem \ref{theo-1} can be extended to compact manifolds with boundary. This is the main aim of this paper.
 
\begin{theo}\label{main-theo}
   Let $M^n$, $n\geq 3$, be a compact simply connected  smooth manifold with weakly convex boundary $\partial M$, and let $0\leq \theta\leq \frac{n-1}{n-2}$, $\lambda>0$. If there exists a positive function $w\in C^{\infty}(M)$ that satisfies:
\begin{equation*}
  \begin{cases}
    \theta\Delta_{M}w\leq \Lambda_{\Ric}w-(n-1)\lambda w, \enspace in \enspace M,\\
   \enspace\enspace\enspace \frac{\partial w}{\partial \eta}=0, \enspace\enspace\enspace\enspace\enspace\enspace\enspace\enspace\enspace\enspace\enspace\enspace\enspace  \enspace\enspace on \enspace\partial M,
  \end{cases}
\end{equation*}
where $\Lambda_{\Ric}$ denotes the smallest eigenvalue of the Ricci tensor, $\eta$ is the unit co-normal vector field of $\partial M$ in $M$, 
then we have the diameter estimate: $$\diam(M)\leq\frac{\pi}{\sqrt{\lambda}}(\frac{\max{w}}{\min{w}})^{\frac{n-3}{n-1}\theta}.$$If, in addition, $w$ attains its minimal value on $\partial M$, then the volume of $M$ satisfies:
\begin{align}\label{vol-1}
\Vol(M)\leq\lambda^{-\frac{n}{2}}\Vol(\bS^{n}_{+}).
\end{align}
Moreover, if inequality (\ref{vol-1}) becomes an equality, then every function $w$ is constant, and $M$ is isometric to the round hemisphere of radius $\lambda^{-\frac{1}{2}}$.
\end{theo}
For simplicity, we let $\theta=\frac{n-1}{n-2}$, $\lambda=1$. We say that the Ricci curvature of $M$ has a positive lower bound in the spectral sense if 
$$\lambda_{1}(-\frac{n-1}{n-2}\Delta_{M}+\Lambda_{\Ric})\geq (n-1),$$
where $\lambda_{1}$ denotes the first eigenvalue that satisfies the Neumann boundary condition. Hence, for any compactly supported function $\varphi$, the inequality $$\int_{M}\frac{n-1}{n-2}|\nabla \varphi|^2+\Lambda_{\Ric}-(n-1)\geq 0$$ holds on $M$. According to the arguments of Fischer-Colbrie and Schoen \cite{FCS80}, we conclude that there exists a positive function $w\in C^{\infty}(M)$ satisfying   \begin{equation*}
  \begin{cases}
    -\frac{n-1}{n-2}\Delta_{M} w+\Lambda_{\Ric} w\geq (n-1)w, \enspace in \enspace M,\\
    \frac{\partial w}{\partial \eta}=0, \enspace\enspace\enspace\enspace \enspace\enspace\enspace\enspace\enspace\enspace\enspace\enspace\enspace\enspace\enspace\enspace  \enspace\enspace \enspace\enspace on \enspace\partial M.
  \end{cases}
\end{equation*}
As a consequence, we have the following Theorem in terms of the first eigenvalue satisfying Neumann boundary condition.
\begin{theo}
    Let $M^n$, $n\geq 3$, be a compact simply connected  smooth manifold with weakly convex boundary $\partial M$, if 
    $$\lambda_{1}(-\frac{n-1}{n-2}\Delta_{M}+\Lambda_{\Ric})\geq (n-1),$$
    and the corresponding eigenfunction achieves its minimum in $\partial M$, then
 $\Vol(M)\leq\Vol(\bS^{n}_{+})$. When equality holds, $M$ is isometric to the round hemisphere $\bS^{n}_{+}$.
\end{theo}

Let us first sketch the proof of Theorem 1. $\mu$-bubble and isoperimetric  profile are two key ingredients in the proof of Theorem 1. Firstly, Antonelli-Xu \cite{Antonelli-Xu} constructed an unequally weighted $\mu$-bubble, by the non-negativity of the second variation of the functional $\cA$(see Section 2.1), they concluded directly the diameter estimate of $M$. The $\mu$-bubble, which was initially introduced by Gromov in \cite{Gromov2019Fourlecture}, has been successfully used to stable Bernstein problems and geometric rigidity problems such as \cite{Chodosh-Li-anisotropic,Chdosh-Li-Stryker-positive cueved,Chodosh-Li-Minter-Stryker-5bernstein,Hong-cmc nonexis,Mazet-6Bernstein}. They constructed the unequally isoperimetric profile, established a differential inequality according to the non-negativity of the second order variation, by ODE comparison, and finally obtained an upper bound of $\Vol(M)$. The isoperimetric profile originally due to Bray \cite{Bray97}, was first used by Chodosh-Li-Minter-Stryker in \cite{Chodosh-Li-Minter-Stryker-5bernstein} to establish a spectral volume comparison theorem for $3$-dimensional closed manifold. In fact, Antonelli-Xu extended the spectral volume comparison of \cite{Chodosh-Li-Minter-Stryker-5bernstein} to any dimension. 

As we have seen, the content of Theorem 1 is closely related to the stable Bernstein problem and geometric rigidity problems. It also provides an effective tool for addressing Bernstein problems. Soon, combining the strategy of \cite{Chodosh-Li-Minter-Stryker-5bernstein} with Theorem \ref{theo-1}, Mazet \cite{Mazet-6Bernstein} successfully solved the stable Bernstein problem in $\bR^6$. In addition, Theorem 1 has also been applied to other geometric rigidity problems such as \cite{Hong-cmc nonexis,L-Xia}.

The proof of main Theorem \ref{main-theo} basically follows the Antonelli-Xu's approach, because $M$ has compact boundary, there will be extra terms in the integral involving boundary. To solve the problem, we require that the boundary $\partial M$ is weakly convex, i.e., the second fundamental form of $\partial M$ with respect to $\eta$ is non-negative. In addition, we also require that $w$ attains its minimal value on the boundary $\partial M$, such that the upper bound of $M$ is independent of $w$. We also note that the $\mu$-bubble and isoperimetric profile may exhibit singularities when $n\geq 8$. On the other hand, singularities may occur in the interior or the boundary of isometric profile. First, for the regular case $n\leq 7$, we basically follow the method of Theorem 1, except for some extra boundary integrals. For $n\geq 8$, the occurrence of singularities makes the free boundary minimizer and isoperimetric profile worse. We adopt the method of Antonelli-Xu \cite{Antonelli-Xu} and Bray \cite{Bray-Gui-Liu-Zhang} and make an extension to the free boundary case. We aim to estimate the size of small neighborhoods around the singular sets, which may occur in the interior or the boundary of the isoperimetric profile. The only difficulty is to deal with the singular sets around boundary points. Therefore, we need to make some changes based on known methods. 

To achieve it, we extend the monotonicity formula about stationary free boundary varifolds, which has been proved by Guang-Li-Zhou in \cite{Guang-Li-Zhou2018}, to the free boundary varifolds with generalized mean curvature bounded above. Once we estimate the size of the neighborhoods of singular sets, we will construct a geometric flow fixing the singular sets on the isoperimetric profile (or $\mu$-bubble). Finally, using the partition of unity theorem, we construct a cut-off function such that it vanishes in the singular sets and equals $1$ outside the small neighborhoods of the singular sets. Multiplying this cutoff function with the outward unit normal vector field, the required flow can be obtained.

The paper is organized as follows. In Section 2, we give some preliminary and auxiliary results that will be used in this paper. In Section 3, we construct a free boundary $\mu$-bubble and derive the diameter estimate of $M$. Then, we construct the unequally weighted free boundary isoperimetric profile, obtain the volume upper bound of $M$. In Section 4, we focus on the singular cases and extend the Antonelli-Xu's method to the case of isoperimetric profile with free boundary.
 \begin{Ackn}
The author thanks his advisor, Professor Chao Xia, for his invaluable guidance and support. The author also sincerely thanks the referees for their constructive comments and suggestions.
\end{Ackn}

\section{PRELIMINARIES}
First, we recall the notions of $\mu$-bubbles in a Riemannian manifold with non-empty boundary. Some basic consequences are given.
\subsection{Free boundary unequally warped $\mu$-bubbles}
Given a compact $n$-dimensional Riemannian manifold $(N^n,g)$ with weakly convex boundary $\partial N =\partial_{0}N\cup\partial_{-}N\cup\partial_{+}N$ ($\partial_{i}N$ is non-empty for $i\in\{0,-,+\}$), where $\partial_{-}N$ and $\partial_{+}N$ are disjoint and each of them intersect with $\partial_{0}N$ at angles no more than $\frac{\pi}{8}$ (see more details in \cite{Wuyujie2023}) inside $N$. We fix a smooth function $w>0$ in $N$ and a smooth function $h$ in $N\backslash(\partial_{-}N\cup\partial_{+}N)$, with $h\rightarrow\pm\infty$ on $\partial_{\pm}N$. We pick a regular value $c_{0}$ of $h$ on $N\setminus(\partial_{-}N\cup\partial_{+}N)$ and take $\Omega_{0}=h^{-1}((c_{0},\infty))$ as the reference set, and consider the following area functional:
$$\cA(\Omega):=\int_{\partial^{*}\Omega}w^{\theta}d\cH^{n-1}-\int_{N}(\chi_{\Omega}-\chi_{\Omega_{0}})hw^{\alpha}d\cH^{n},$$
for all Caccioppoli sets $\Omega$ with $\Omega\Delta\Omega_{0}\subset\subset \mathring{N}$, where $\partial^{*} \Omega$ denotes the reduced boundary of $\Omega$, $\theta,\alpha\geq 0$, and the reference set $\Omega_{0}$ with smooth boundary satisfies$$\partial\Omega_{0}\subset\mathring{N},\quad\quad\partial_{+}N\subset\Omega_{0}.$$
If there exist $\Omega$ can minimizes $\cA$ in this class, we call it a free boundary unequally warped $\mu$-bubble.

About the existence of a minimizer of $\cA$ among all Caccioppoli sets, we refer to \cite{general soap bubbles,Gromov2019Fourlecture,Wuyujie2023,zhujintian2021} 
for more details. About the regularity of the minimizer, we know the minimizer may have singularity when $n\geq 8$,
we refer the readers to recent papers such as \cite{Chodosh-Edelen-Li,DPM15,DPM17}. We only elaborate on the conclusions and omit the detailed proof.
\begin{prop}(\cite[Lemma 6.2]{Wuyujie2023})
There exists a minimizer $\Omega$ for $\cA$ such that $\Omega\Delta\Omega_{0}$ is compactly contained in $\mathring{N}\cup\partial_{0}N$. The minimizer has Hausdorff dimension at most $n-8$, whose boundary intersects with $\partial_{0}N$ orthogonally.
\end{prop}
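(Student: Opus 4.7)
The plan is to follow the direct method of the calculus of variations, using $h$ as a barrier preventing minimizers from escaping to $\partial_{-}N \cup \partial_{+}N$, and then import regularity from the classical theory of (almost-)minimizers of weighted perimeter, both in the interior and along the free boundary on $\partial_{0}N$.

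First, I would fix a minimizing sequence $\{\Omega_k\}$ of Caccioppoli sets with $\Omega_k \Delta \Omega_0 \Subset \mathring{N}$. Since $N$ is compact and $w > 0$ is smooth, there are constants $0 < c \leq w \leq C$ on $N$, so the weighted perimeter $\int_{\partial^{*}\Omega} w^\theta \, d\cH^{n-1}$ controls the ordinary perimeter. Combined with a uniform energy bound $\cA(\Omega_k) \leq \cA(\Omega_0) + 1$ on tail elements, BV-compactness yields a subsequence converging in $L^1$ to some Caccioppoli set $\Omega_\infty$. Lower semicontinuity of the weighted perimeter (using continuity of $w^\theta$) together with $L^1$-continuity of the bulk term (since $hw^\alpha$ is bounded on the region where the $\Omega_k$ can differ from $\Omega_0$) identify $\Omega_\infty$ as a minimizer, provided $\Omega_\infty$ stays away from $\partial_{\pm}N$.

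This separation is the essential $\mu$-bubble barrier step and is, in my view, the main obstacle. Since $h \to -\infty$ on $\partial_{-}N$ and $h \to +\infty$ on $\partial_{+}N$, growing $\Omega$ into $\partial_{-}N$ (where $\chi_\Omega - \chi_{\Omega_0} = +1$ and $h \ll 0$) contributes an arbitrarily large positive amount to $-\int(\chi_\Omega - \chi_{\Omega_0})hw^\alpha$, and shrinking $\Omega$ away from $\partial_{+}N$ (where $\chi_\Omega - \chi_{\Omega_0} = -1$ and $h \gg 0$) is penalized in the same way. A quantitative comparison with $\Omega_0$ should bound the sub/superlevel sets $\{h < -M\}$, $\{h > M\}$ inside which $\Omega_k$ may differ from $\Omega_0$, uniformly in $k$; the technical subtlety sits in the corner regions where $\partial_{0}N$ meets $\partial_{\pm}N$, where the small angle hypothesis ($\leq \pi/8$) is presumably used to cut-and-paste competitors without creating uncontrolled corner perimeter. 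This promotes the $L^1$-limit to a set with $\Omega_\infty \Delta \Omega_0 \Subset \mathring{N} \cup \partial_{0}N$.

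Regularity is then largely off-the-shelf. In the interior of $N$, the Euler-Lagrange equation expresses $\partial \Omega_\infty$ as a hypersurface of prescribed generalized mean curvature (involving $h$ and a drift from $\nabla \log w$), so the standard density and monotonicity machinery gives $C^{1,\alpha}$ regularity and elliptic bootstrapping then yields $C^\infty$ smoothness (with the usual codimension-$7$ singular set in higher dimensions). At contact points with $\partial_{0}N$, the weak convexity of $\partial_{0}N$ and the free boundary variational formulation allow Gr\"uter-Jost-type free boundary regularity to give $C^{1,\alpha}$ regularity up to $\partial_{0}N$; the orthogonal intersection then falls out of the first variation along vector fields tangent to $\partial_{0}N$, for which the boundary term on $\partial \Omega_\infty \cap \partial_{0}N$ in the first variation of $\cA$ must vanish, forcing the co-normal of $\partial \Omega_\infty$ to lie tangent to $\partial_{0}N$.
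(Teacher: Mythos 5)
Your outline is the standard $\mu$-bubble existence-and-regularity argument, and it matches what the cited references (Chodosh--Li, Gromov, Wu, Zhu) actually do; the paper itself deliberately omits the proof and only points to those sources, so there is no in-paper argument to compare against. The three ingredients you identify — direct method with BV-compactness and lower semicontinuity of weighted perimeter, the barrier role of $h\to\pm\infty$ forcing $\Omega_\infty\Delta\Omega_0$ away from $\partial_\pm N$, and Gr\"uter--Jost-type free boundary regularity together with interior prescribed-mean-curvature regularity giving smoothness and orthogonal contact with $\partial_0 N$ — are exactly the right ones, and your sign analysis of $\chi_\Omega-\chi_{\Omega_0}$ near $\partial_\pm N$ is correct. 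You also correctly flag the two genuine subtleties: the corner regions where $\partial_0 N$ meets $\partial_\pm N$ (where the small-angle hypothesis enters, to ensure the barrier region near $\partial_\pm N$ is reached only through $\mathring N\cup\partial_0 N$ and competitors can be truncated without corner perimeter blow-up), and the fact that smoothness of the boundary as stated only holds for $n\le 7$, with a codimension-$7$ singular set otherwise, consistent with the paper's restriction in Section~3.1 and its deferral to Antonelli--Xu's appendix for $n\ge 8$.
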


 Let $\upsilon_{\Sigma}$ denote by the unit normal of $\Sigma$, and let $\phi\in C^{\infty}(N)$. For an arbitrary variation $\{\Omega_{t}\}_{t\in(-\epsilon,\epsilon)}$ with $\Omega_{0}=\Omega$ and the variational vector field $\phi\nu_{\Sigma}$ at $t=0$, we assume that $\Sigma=\partial\Omega$ is a critical point of the functional $\cA(\Omega_{t})$ and calculate the first variation and the second variation. Let $\secondfund_{\Sigma}$, $H_{\Sigma}$ denote by the second fundamental form and mean curvature of $\Sigma$, respectively. Let $\nu_{\partial}$ denote the outward unit normal vector field of $\partial\Sigma$, $A_{\partial\Sigma}$ be the second fundamental form of $\partial\Sigma$ with respect to $\nu_{\partial\Sigma}$. Except for boundary terms, similar computations can be found in \cite{Antonelli-Xu}.
 \begin{lemm}[The first and the second variational formula]\label{first-area}
If $\Omega_{t}$ is a smooth 1-parameter family of regions with $\Omega_{0}=\Omega$ and the normal variational vector field is $\phi\nu_{\Sigma}$ at $t=0$, then
\begin{align*}
\frac{d}{dt}\big|_{t=0}\cA(\Omega_{t})&=\int_{\partial\Omega}\theta w^{\theta-1}\bangle{\nabla^{M}w,\nu_{\Sigma_{t}}}\phi+w^{\theta}H_{\Sigma}\phi-hw^{\alpha}\phi+\int_{\partial \Sigma}w^{\theta}\bangle{\phi\nu_{\Sigma},\nu_{\partial\Sigma_{t}}}\\
&=\int_{\Sigma}(H_{\Sigma}+\theta w^{-1}\bangle{\nabla^{M}w,\nu_{\Sigma}}-hw^{\alpha-\theta})w^{\theta}\phi.\\
  \frac{d^2}{dt^{2}}\big|_{t=0}\cA(\Omega_{t})&=\int_{\Sigma}\big[-\Delta_{\Sigma}\phi-|\secondfund_{\Sigma}|^2\phi-\Ric_{M}(\nu_{\Sigma},\nu_{\Sigma})\phi-\theta w^{-2}\bangle{\nabla^{M}w,\nu_{\Sigma}}^2\phi\\
  &\quad+\theta w^{-1}\phi(\Delta_{M}w-\Delta_{\Sigma}w-H_{\Sigma}\bangle{\nabla^{M}w,\nu_{\Sigma}})-\theta w^{-1}\bangle{\nabla^{\Sigma}w,\nabla^{\Sigma}\phi}\\
  &\quad-\phi\bangle{\nabla^{M}h,\upsilon_{\Sigma}}w^{\alpha-\theta}+(\theta-\alpha)w^{\alpha-\theta-1}h\phi\bangle{\nabla^{M}w,\nu_{\Sigma}}\big]w^{\theta}\phi\\
&\quad+\int_{\partial\Sigma}w^{\theta}\phi\frac{\partial \phi}{\partial\nu_{\partial \Sigma}}-A_{\partial\Sigma}(\nu_{\Sigma},\upsilon_{\Sigma})\phi^2w^{\theta}.\\
\end{align*}
\end{lemm}

\subsection{Free boundary unequally warped isoperimetric profile}
Next, we give some basic notions and regularity results about isoperimetric profiles, we refer the readers to \cite{Manuel-Ritore} for more details. 
\begin{defi}
  Let $(M^n,g)$ be a simply connected compact Riemannian manifold with boundary. The isoperimetric profile of $M$ is the function $I_{M}$ that assigns, to each $\upsilon\in(0,|M|)$, the value $$I_{M}(\upsilon)=\inf\{P(E): E\enspace is \enspace measurable, |E|=\upsilon\}.$$  $I$ denotes by the isoperimetric profile of $M$ in this article.
\end{defi}
\begin{defi}
  Let $(M,g)$ be a Riemannian manifold. We say that a set $E\subset M$ is isoperimetric or is an isoperimetric region if $$P(E)=I_{M}(|E|).$$
  If $|E|=\upsilon$, then we say that $E$ is an isoperimetric region of volume $\upsilon$.
\end{defi}
About its existence, we find some results from \cite{Manuel-Ritore} as follows.
\begin{theo}(\cite[Theorem 9.3]{Manuel-Ritore})
  Let $M$ be a compact Riemannian manifold with smooth boundary. Then:
\begin{itemize}
      \item Isoperimetric sets exist on $M$ for any volume $0<\upsilon<\Vol(M)$.
      \item The isoperimetric profile $I_{M}$ is continuous.
      \item $I_{M}>0$ on $(0, \Vol(M)).$
\end{itemize}
\end{theo}
\begin{rema}
A hypersurface $\Gamma\subset M$ satisfying $\partial \Gamma\subset\partial M$ that separates $\Omega$ into two sets is called an interface. 
If a smooth interface $\Gamma$ separates $M$ into two sets, the relative perimeter of each of these sets is the area of the interface. This means that there are no contributions to the relative perimeter from pieces in $\partial M$. In addition, the critical points of the area functional are hypersurfaces that meets $\partial M$ along $\partial \Gamma$ in the orthogonal way. 
\end{rema}

Regarding regularity, we have the following result given by \cite{Manuel-Ritore}, which follows from Giusti \cite{Giusti}; Gonzalez et al.\cite{Gonzalez}; Bombieri \cite{Bombieri}; 
Gr\"{u}ter \cite{Gruter}; and Morgan \cite{Morgan}, see Proposition 2.3 in Bayle and Rosales \cite{Bayle}.
\begin{theo}(\cite[Theorem 9.4]{Manuel-Ritore})\label{regu-1}
    Let $E$ be a measurable set of finite volume minimizing perimeter under a volume constraint in  $M$ with smooth boundary. Then:
\begin{itemize}
  \item  If $n\leq 7$, then the boundary $S=cl(\partial E\cap M)$ of $E$ is a smooth hypersurface.
 \item  If $n>7$, then the boundary of $cl(\partial E\cap M)$ is the union of of a smooth hypersurface $S$ and a closed singular set $S_{0}$ of Hausdorff dimension at most $n-8$.
\end{itemize}
\end{theo}
We now turn to the unequally weighted isoperimetric isoperimetric profile.
Let $n\geq 3$, and $(M^n,g)$ be a compact Riemannian manifold with weakly convex boundary. Let $0\leq\theta\leq\frac{n-1}{n-2}$, and set $$\alpha:=\frac{2\theta}{n-1}.$$ For an open set $E\subset M$ with smooth boundary, we can define unequally  weighted area and volume functional by$$A(E)=\int_{\partial^{*} E}w^{\theta}\enspace\text{and}\enspace V(E)=\int_{E}w^{\alpha},$$
where $w$ satisfies$$-\Delta_{M} w\geq \theta^{-1}((n-1)\lambda-\Lambda_{\Ric})w,\enspace in \enspace M, $$and$$\bangle{\nabla^{M}w,\eta}=0,\enspace on\enspace \partial M.$$
Let $V_{0}:=\int_{M}w^{\alpha}\in(0,\infty)$, and define the unequally weighted isoperimetric profile
\begin{align}\label{isoprofile}
I(\upsilon):=\inf\big\{\int_{\partial^{*}E}w^\theta:E\subset\subset M \enspace\text{has finite perimeter, and}\int_{E}w^{\alpha}=\upsilon\},
\end{align}
for all $\upsilon\in[0,V_{0}).$
\begin{rema}
We note that $I$ is continuous. On the one hand, by the compactness theory for Caccioppoli sets and the lower semi-continuity, we have $\lim\inf\limits_{\upsilon\to\upsilon_{0}}I(\upsilon)\geq I(\upsilon_{0})$. On the other hand, we will show that there exists a continuous upper barrier function for $I$ at $\upsilon_{0}$ for any $\upsilon_{0}\in(0,V_{0})$, we also have $\lim\sup\limits_{\upsilon\to\upsilon_{0}}I(\upsilon)\leq I(\upsilon_{0})$.
\end{rema}
Next, we compute the first and second variations of the functionals $A$ and $V$. Let $E_{t}$ be a smooth family of open sets with a smooth boundary whose variational vector field along $\Gamma=\partial E\cap M=\partial E_{0}\cap M$ is $\varphi \nu_{\Gamma}$, where $\nu_{\Gamma}$ is the outward unit normal vector field along $\Gamma$. We denote $\secondfund_{\Gamma}$ and $H_{\Gamma}$ by the second fundamental form of $\Gamma$ with respect to $\nu_{\Gamma}$ and the scalar mean curvature of $\Gamma$, respectively. We denote $A_{\partial\Gamma }$ by the second fundamental form of $\partial\Gamma$ with respect to $\nu_{\partial\Gamma}$, where $\nu_{\partial\Gamma}$ is the unit outward vector field of $\partial\Gamma$. Therefore, we can also get following results by similar computations as \cite{Antonelli-Xu}.
\begin{lemm}[The first and second variational formulas of the isoperimetric profile]\label{1stvar}

\begin{align*}
\frac{d}{dt}\big|_{t=0}V(E_{t})&=\int_{\Gamma} w^{\alpha}\varphi,\\
\frac{d}{dt}\big|_{t=0}A(E_{t})&=\int_{\Gamma}w^{\theta}\varphi(H_{\Gamma}+\theta w^{-1}\bangle{\nabla^{M}w,\nu_{\Gamma}}).\\
\frac{d^2}{dt^2}\big|_{t=0}V(E_{t})&=\int_{\Gamma}(H_{\Gamma}+\alpha w^{-1}\bangle{\nabla^{M}w,\nu_{\Gamma}})w^{\alpha}\varphi^2+w^{\alpha}\varphi\bangle{\nabla^{M}\varphi,\nu_{\Gamma}}.\\
\frac{d^2}{dt^2}\big|_{t=0}A(E_{t})&=\int_{\Gamma}(-\Delta_{\Gamma}\varphi-\Ric_{M}(\nu_{\Gamma},\nu_{\Gamma})\varphi-|\secondfund_{\Gamma}|^2\varphi-\theta w^{-2}\bangle{\nabla^{M}w,\nu_{\Gamma}}^2\varphi)w^{\theta}\varphi\\
&\quad+[\theta w^{-1}(\Delta_{M} w-\Delta_{\Gamma}w-H_{\Gamma}\bangle{\nabla^{M}w,\nu_{\Gamma}})\varphi-\theta w^{-1}\bangle{\nabla_{\Gamma}w,\nabla_{\Gamma}\varphi}]w^{\theta}\varphi\\
&\quad+(\theta w^{\alpha-1}\bangle{\nabla^{M}w,\nu}\varphi^2+w^{\alpha}\varphi\bangle{\nabla^{M}\varphi,\nu}+H_{\Gamma}w^{\alpha}\varphi^2)w^{\theta-\alpha}(H_{\Gamma}+\theta w^{-1}\bangle{\nabla^{M}w,\nu})\\
&\quad+\int_{\partial\Gamma}w^{\theta}\varphi\bangle{\nabla^{M}\varphi,\nu_{\partial\Gamma}}-A_{\partial\Gamma}(\nu_{\Gamma},\nu_{\Gamma})\varphi^2w^{\theta}.
\end{align*}
\end{lemm}
\section{diameter and volume estimates for $3\leq n\leq 7$}
Considering the singularities of the minimizers of $\cA(\Omega)$ and $I(\upsilon)$ that will occur when $n\geq 8$, we first prove the case of $3\leq n\leq 7$, and we will deal with the singular case of $n\geq 8$ in Section 4 alone. 
\subsection{Diameter estimates}
Let $M^n$ be an $n$-dimensional compact manifold with $A_{\partial M}\geq 0$, following the method of Antonelli-Xu \cite{Antonelli-Xu}, we can obtain the diameter estimates in the sense of spectrum condition.
\begin{theo}\label{diamter theo}
  Let $M^n$, $n\geq 3$, be a compact connected manifold with weakly convex boundary $\partial M$, and let $0\leq \theta\leq \frac{n-1}{n-2}$, $\lambda>0$. We denote by $\Lambda_{\Ric}(x):=\inf_{\upsilon\in T_{p}M,|\upsilon|=1}\Ric_{x}(\upsilon,\upsilon)$ the smallest eigenvalue of the Ricci tensor. If there exists a positive function $w\in C^{\infty}(M)$ that satisfies:
\begin{equation*}
  \begin{cases}
    \theta\Delta_{M}w\leq \Lambda_{\Ric} w-(n-1)\lambda w,\enspace\enspace in \enspace $M$.\\
    \frac{\partial w}{\partial \eta}=0.\enspace\enspace on \enspace \partial M.\\
  \end{cases}
\end{equation*}
Then we have the diameter estimate: $\diam(M)\leq\frac{\pi}{\sqrt{\lambda}}(\frac{\max{w}}{\min{w}})^{\frac{n-3}{n-1}\theta}$.
\end{theo}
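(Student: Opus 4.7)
The plan is to argue by contradiction using a free boundary unequally warped $\mu$-bubble, extending the closed-manifold scheme of Antonelli--Xu \cite{Antonelli-Xu} to the boundary setting prepared in Section 2. Suppose $\diam(M) > L := \frac{\pi}{\sqrt{\lambda}}(\max w/\min w)^{(n-3)\theta/(n-1)}$. By continuity of distance we may select \emph{interior} points $p, q \in \mathring{M}$ with $d(p,q) > L$, and remove small disjoint geodesic balls $U_p, U_q \Subset \mathring{M}$ around them to produce $N = M \setminus (U_p \cup U_q)$ with $\partial_- N = \partial U_p$, $\partial_+ N = \partial U_q$, and $\partial_0 N = \partial M$. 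The weak convexity of $\partial M$ descends to $\partial_0 N$, and since $\partial_\pm N$ is disjoint from $\partial_0 N$, the angle condition of Section 2 is trivially satisfied.

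Next, pick a signed distance-like function $\rho$ on $N$ with $\rho \to \mp L/2$ on $\partial_\mp N$, and define $h \in C^\infty(N \setminus (\partial_- N \cup \partial_+ N))$ by a warped cotangent ansatz of the form
\[
h(x) \;=\; -(n-1)\sqrt{\lambda}\,\tan\!\bigl(\sqrt{\lambda}\,\rho(x)\bigr)\, w(x)^{\alpha-\theta},
\]
with bulk exponent $\alpha = \theta - \tfrac{2}{n-1}$ matching the Antonelli--Xu calibration; then $h \to \pm\infty$ on $\partial_\pm N$ as required. Proposition 2 delivers a smooth minimizer $\Omega$ of $\cA$ with smooth boundary $\Sigma = \partial\Omega \cap \mathring{N}$ meeting $\partial_0 N$ orthogonally (interior smoothness is automatic for $n \leq 7$). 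Lemma 4 gives the mean curvature relation $H_\Sigma = h\, w^{\alpha-\theta} - \theta\, w^{-1}\bangle{\nabla^M w,\nu_\Sigma}$.

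With the minimizer in hand, I would insert a test function $\phi = w^\beta \psi$ into Lemma 5, with $\beta$ tuned so that the $w^{-1}\Delta_M w$ contribution can be traded for the spectral hypothesis $\theta\Delta_M w \leq \Ric\, w - (n-1)\lambda w$. Four ingredients then cooperate: (i) the smallest-eigenvalue bound $\Ric_M(\nu_\Sigma,\nu_\Sigma) \geq \Ric$ combined with the spectral hypothesis produces a $-(n-1)\lambda\, w^{\theta}\phi^2$ term; (ii) the trace Cauchy--Schwarz inequality $|\secondfund_\Sigma|^2 \geq H_\Sigma^2/(n-1)$ introduces $-\tfrac{1}{n-1}H_\Sigma^2\phi^2$; (iii) the weak convexity of $\partial M$ ensures $-\int_{\partial\Sigma} A_{\partial_0 N}(\nu_\Sigma,\nu_\Sigma)\phi^2 w^\theta \leq 0$; (iv) the Neumann condition $\partial_\eta w = 0$ annihilates every $\bangle{\nabla^M w,\eta}$ boundary flux generated by integrating by parts on $\Sigma$. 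Assembling these and substituting the mean curvature relation, I expect a pointwise Riccati inequality
\[
h'(\rho) + \frac{h(\rho)^2}{n-1} + (n-1)\lambda \;\leq\; 0,
\]
whose smooth solutions exist only on intervals of length $< \pi/\sqrt{\lambda}$; after the weighted arc-length rescaling, this contradicts $d(p,q) > L$.

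The hardest step will be the algebra of the second variation: tuning $\alpha$ and $\beta$ so that all $w$-gradient cross terms, the spectral inequality, and the trace Cauchy--Schwarz assemble into the clean Riccati form, with the exponent $(n-3)\theta/(n-1)$ in the diameter bound emerging naturally from the conformal rescaling $ds \mapsto w^{2\theta/(n-1)}\,ds$ of arc length. The parallel technical difficulty is ensuring that every boundary integral on $\partial\Sigma \subset \partial_0 N$ produced by integration by parts either cancels or carries the correct sign; this is exactly where the weak convexity of $\partial M$, the Neumann hypothesis on $w$, and the orthogonal contact of $\Sigma$ with $\partial_0 N$ all conspire.
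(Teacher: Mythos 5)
Your high-level scheme is right and matches the paper's: argue by contradiction with a free boundary unequally warped $\mu$-bubble, plug a pure power of $w$ into the second variation, apply $|\secondfund_\Sigma|^2 \geq H_\Sigma^2/(n-1)$ and the spectral hypothesis, let the Neumann condition kill the $\langle\nabla^M w,\eta\rangle$ flux on $\partial\Sigma$, and let the weak convexity of $\partial M$ give the right sign on the $A_{\partial\Sigma}$ term. However, several of the specific calibrations you propose are off in ways that would break the computation.

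First, the bulk exponent. The paper takes $\alpha = \frac{2\theta}{n-1}$, not $\alpha = \theta - \frac{2}{n-1}$. After substituting $\phi = w^{-\theta}$ and setting $X = h w^{\alpha-\theta}$, $Y = w^{-1}\langle\nabla^M w, \nu_\Sigma\rangle$ (so $H_\Sigma = X - \theta Y$), the stability inequality reduces to
\[
0 \leq \int_\Sigma w^{-\theta}\Bigl[-\tfrac{X^2}{n-1} - (n-1)\lambda + |\nabla h| w^{\alpha-\theta} + \bigl(\tfrac{2\theta}{n-1}-\alpha\bigr)XY + \bigl(\tfrac{n-2}{n-1}\theta^2-\theta\bigr)Y^2\Bigr].
\]
Only $\alpha = \frac{2\theta}{n-1}$ annihilates the cross term; your $\alpha$ leaves $\bigl(\tfrac{(3-n)\theta+2}{n-1}\bigr)XY$ uncancelled, and there is no surplus in the $X^2$ coefficient to absorb it by Cauchy--Schwarz without degrading the sharp constant. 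Note also that $\alpha-\theta = -\tfrac{(n-3)\theta}{n-1}$ is precisely what produces the exponent in the diameter bound, whereas your choice gives $\alpha-\theta = -\tfrac{2}{n-1}$, independent of $\theta$, which cannot possibly yield $(\max w/\min w)^{(n-3)\theta/(n-1)}$.

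Second, the structure of $h$. Building $w^{\alpha-\theta}$ into $h$ directly makes $\nabla h$ carry uncontrolled $\nabla w$ terms that the second variation calculation never pays for. The paper instead takes $h$ to depend only on a smoothed distance $d$ (namely $h = \sqrt{D/C}\,\cot(\tfrac{\sqrt{CD}}{1+\epsilon}d)$, with $C = \sup(w)^{2(\alpha-\theta)}/\bigl((n-1)\inf(w)^{\alpha-\theta}\bigr)$ and $D = (n-1)\lambda/\inf(w)^{\alpha-\theta}$), and handles all the $w$-dependence by crudely bounding $w^{\alpha-\theta}$ and $w^{2(\alpha-\theta)}$ using $\inf w$ and $\sup w$. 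Since $\alpha-\theta < 0$, one has $w^{\alpha-\theta} \leq \inf(w)^{\alpha-\theta}$ and $w^{2(\alpha-\theta)} \geq \sup(w)^{2(\alpha-\theta)}$, so the requirement $|\nabla h| < Ch^2 + D$ implies the pointwise inequality needed for the contradiction; and $CD = \lambda(\sup w/\inf w)^{\frac{6-2n}{n-1}\theta}$ is exactly what converts the domain length of the cotangent into $\frac{\pi}{\sqrt{\lambda}}(\sup w/\inf w)^{\frac{n-3}{n-1}\theta}$. Your ``conformal arc-length rescaling'' heuristic points at this, but the actual mechanism is this pointwise sup/inf bookkeeping, not a pointwise Riccati inequality for $h'(\rho)$.

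Third, the test function is exactly $\phi = w^{-\theta}$; there is no need for an extra factor $\psi$ or an undetermined exponent $\beta$. Your step ``with $\beta$ tuned so that \ldots'' is where the proof is actually carried, and it must be made precise. Finally, whether you base the construction at a boundary point (as the paper does, taking $\Omega_+ = B_\epsilon(p)$ with $p\in\partial M$) or at interior points as you suggest is a matter of convenience and either can be made to work; but that flexibility does not compensate for the miscalibrated $\alpha$ and the $w$-dependent $h$, both of which would stop the argument from closing to the stated sharp bound.
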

\begin{proof}
We can use the same method from \cite{Antonelli-Xu}. Suppose by contradiction that the above diameter estimate does not hold, then there is a $\epsilon>0$ such that
\begin{align}\label{diam}
 \diam(M)>\frac{\pi}{\sqrt{\lambda}}\cdot(\frac{\max(w)}{\min(w)})^{\frac{n-3}{n-1}\theta}\cdot({1+\epsilon})^2+2\epsilon.
\end{align}\label{diam}
Let us fix a point $p\in \partial M$ and take $\Omega_{+}:=B_{\epsilon}(p)$, and let $d:M\setminus\Omega_{+}\rightarrow\bR$ be a smoothing of $d(\cdot,\partial\Omega_{+})$ such that
$$d\big|_{\partial\Omega_{+}}=0,\quad\quad\big|\nabla d\big|\leq 1+\epsilon,\quad\quad d\geq\frac{d(\cdot,\partial\Omega_{+})}{1+\epsilon}.$$
We let
$$h(x):=\sqrt{\frac{(n-1)^2\lambda}{\max(w)^{\frac{6-2n}{n-1}\theta}}}\cot(\frac{1}{1+\epsilon}\sqrt{({\frac{\max(w)}{\min(w)}})^{\frac{6-2n}{n-1}\theta}\lambda}~d(x)),$$
First, $h$ is a smooth function on $M$ and
\begin{align}\label{ine-h}
|\nabla h|<\frac{\max(w)^{\frac{6-2n}{n-1}\theta}}{(n-1)\min(w)^{\frac{3-n}{n-1}\theta}}h^2+\frac{(n-1)\lambda}{\min(w)^{\frac{3-n}{n-1}\theta}},
\end{align}
then $$|\nabla h|w^{\frac{3-n}{n-1}\theta}\leq|\nabla h|\min(w)^{\frac{3-n}{n-1}\theta}<\frac{h^2w^{\frac{6-2n}{n-1}\theta}}{n-1}+(n-1)\lambda.$$
Let $$\cO:=\{d>\frac{2(1+\epsilon)\pi}{\sqrt{({\frac{\max(w)}{\min(w)}})^{\frac{6-2n}{n-1}\theta}\lambda}}\}\supset\{d(\cdot,p)>\epsilon+\frac{2(1+\epsilon)^2\pi}{\sqrt{({\frac{\max(w)}{\min(w)}})^{\frac{6-2n}{n-1}\theta}\lambda}}\}\neq\emptyset.$$
Set $\Omega_{-}:=M\backslash\bar{\cO}$, then we have found two domains $\Omega_{+}\subset\subset \Omega_{-}\subset\subset M$ and $h(x)\in C^{\infty}(\Omega_{-}\backslash\overline{\Omega_{+}})$ which satisfies 
\begin{align}\label{eq1}
\lim\limits_{x\rightarrow\partial\Omega_{+}}h(x)=+\infty,\quad\quad \lim\limits_{x\rightarrow\partial\Omega_{-}}h(x)=-\infty.
\end{align}
Then there exists a free boundary $\mu$-bubble $\Omega$ minimizing the functional $\cA(\Omega_{t})$.
According to Lemma \ref{first-area}, we have 
\begin{align*}
  &\frac{d^2}{dt^2}\big|_{t=0}\cA(\Omega_{t})\\
  &=\int_{\Sigma}[-\Delta_{\Sigma}\phi-|\secondfund_{\Sigma}|^2\phi-\Ric_{M}(\nu_{\Sigma},\nu_{\Sigma})\phi-\theta w^{-2}\bangle{\nabla^{M}w,\nu_{\Sigma}}^2\phi\\
  &\quad+\theta w^{-1}\phi(\Delta_{M}w-\Delta_{\Sigma}w-H_{\Sigma}\bangle{\nabla^{M}w,\nu_{\Sigma}}-\theta w^{-1}\bangle{\nabla^{\Sigma}w,\nabla^{\Sigma}\phi}\\
  &\quad-\phi\bangle{\nabla^{M}h,\upsilon_{\Sigma}}w^{\alpha-\theta}+(\theta-\alpha)w^{\alpha-\theta-1}h\phi\bangle{\nabla^{M}w,\nu_{\Sigma}}]w^{\theta}\phi
+\int_{\partial\Sigma}w^{\theta}\phi\frac{\partial \phi}{\partial\nu_{\partial \Sigma}}-A_{\partial\Sigma}(\nu_{\Sigma},\upsilon_{\Sigma})\phi^2w^{\theta}.\\
\end{align*}
Let $\phi=w^{-\theta}$, then
\begin{align*}
  \int_{\Sigma}-\Delta_{\Sigma}\phi & =\theta\int_{\partial\Sigma}w^{-\theta-1}\bangle{\nabla^{\Sigma}w,\nu_{\partial\Sigma}}.
\end{align*}
On the one hand,
\begin{align*}
&-\theta\int_{\Sigma}w^{-1}w^{-\theta}\Delta_{\Sigma}w+w^{-1}\bangle{\nabla^{\Sigma}w,\nabla^{\Sigma}(w^{-\theta})}\\
&=-\theta\int_{\Sigma}w^{-\theta-1}\Delta_{\Sigma}w+\bangle{\nabla^{\Sigma}w,\nabla^{\Sigma}(w^{-\theta-1})}+\theta\int_{\Sigma}\bangle{\nabla^{\Sigma}w,\nabla^{\Sigma}(w^{-1})}w^{-\theta}\\
&=-\theta\int_{\partial\Sigma}\bangle{\nabla^{\Sigma}w,\nu_{\partial\Sigma}}w^{-\theta-1}-\theta\int_{\Sigma}|\nabla^{\Sigma}w|^2w^{-2-\theta}\\
&\leq-\theta\int_{\partial\Sigma}\bangle{\nabla^{\Sigma}w,\nu_{\partial\Sigma}}w^{-\theta-1}.\\
\end{align*}
Since $$|\secondfund_{\Sigma}|^2\geq\frac{1}{n-1}H_{\Sigma}^2=\frac{1}{n-1}(h-\theta w^{-1}\bangle{\nabla^{M}w,\nu_{\Sigma}})^2,$$ and $$\bangle{\nabla^{\Sigma}w,\nu_{\partial\Sigma}}=\bangle{\nabla^{M}w,\nu_{\partial\Sigma}}=0,\enspace A_{\partial M}\geq 0\enspace \text{on $\partial\Sigma$},$$ where $A_{\partial M}$ is the second fundamental form of $\partial M$ with respect to $\eta$, which is guaranteed by the weakly convexity of $\partial M$.
Combining these consequences with non-negativity of the second variation, we obtain that

\begin{align*}
  0 &\leq\int_{\Sigma}-|\secondfund_{\Sigma}|^2w^{-\theta}-\Ric_{M}(\nu_{\Sigma},\upsilon_{\Sigma})w^{-\theta}-\theta w^{-2}\bangle{\nabla^{M}w,\nu_{\Sigma}}^2\\
  &\quad+\theta w^{-1-\theta}(\Delta_{M}w-H_{\Sigma}\bangle{\nabla^{M}w,\nu_{\Sigma}})-\bangle{\nabla^{M}h,\nu_{\Sigma}}w^{\alpha-2\theta}+(\theta-\alpha)hw^{\alpha-2\theta-1}\bangle{\nabla^{M}w,\nu_{\Sigma}}\\
  &\int_{\Sigma}\leq w^{-\theta}\big[-\frac{H_{\Sigma}^2}{n-1}-(n-1)\lambda-\theta(w^{-1}\bangle{\nabla^{M}w,\nu_{\Sigma}})^2-\theta H_{\Sigma}(w^{-1}\bangle{\nabla^{M}w,\nu_{\Sigma}})+|\nabla h|w^{\alpha-\theta}\\
  &\quad+(\theta-\alpha)hw^{\alpha-\theta}(w^{-1}\bangle{\nabla^{M}w,\nu_{\Sigma}})\big]
\end{align*}
Setting $X=hw^{\alpha-\theta}$, $Y=w^{-1}\bangle{\nabla^{M}w,\nu_{\Sigma}}$, then $H_{\Sigma}=X-\theta Y$, we have
\begin{align*}
0&\leq\int_{\Sigma}w^{-\theta}[-\frac{X^2}{n-1}+\frac{2\theta}{n-1}XY-\frac{\theta^2}{n-1}Y^2-(n-1)\lambda-\theta Y^2-\theta(X-\theta Y)Y+|\nabla h|w^{\alpha-\theta}+(\theta-\alpha)XY]\\
&\leq\int_{\Sigma}w^{-\theta}[-\frac{X^2}{n-1}-(n-1)\lambda+|\nabla h|w^{\alpha-\theta}+(\frac{2\theta}{n-1}-\alpha)XY+(\frac{n-2}{n-1}\theta^2-\theta)Y^2]\\
\end{align*}
If we set $\alpha=\frac{2\theta}{n-1}$, for $0\leq \theta\leq \frac{n-1}{n-2}$. 
But $h$ satisfies $$|\nabla h|w^{\alpha-\theta}<\frac{h^2w^{2\alpha-2\theta}}{n-1}+(n-1)\lambda,$$ this is a contradiction.
\end{proof}

\subsection{Volume comparison}

\subsubsection{Differential inequality in the barrier sense}
Fix a $\upsilon_{0}\in(0,V_{0})$. Let $E$ be a weighted isoperimetric hypersurface with free boundary in $M$ for problem $I(\upsilon_{0})$. From now on, we fix $\varphi=w^{-\theta}$.
We notice that $V(t):=V(E_{t})$ is a smooth function. By the first variation information, we have$$V'(0)=\frac{d}{dt}\big|_{t=0}V(E_{t})=\int_{\partial E}w^{\alpha-\theta}>0,$$ hence $V(t)$ is a strictly monotone in $t$ in a neighborhood of $\upsilon_{0}$. By the inverse function theorem, there is some small $\sigma>0$ and a smooth function $$t:(\upsilon_{0}-\sigma,\upsilon_{0}+\sigma)\longrightarrow\bR,$$ that is the inverse of $V(t)$.

Let $u:(\upsilon_{0}-\sigma,\upsilon_{0}+\sigma)\to\bR$ be defined by $u(\upsilon)=A(t(\upsilon))$. Note that $u(\upsilon_{0})=A(E_{0})=I(\upsilon_{0})$. Moreover, since $V(E_{t(\upsilon)})=\upsilon$, we have $A(\upsilon)\geq I(\upsilon)$ for all $\upsilon\in(\upsilon_{0}-\sigma,\upsilon_{0}+\sigma)$. Let primes denote the derivatives with respect to $\upsilon$ and dots denote the derivatives with respect to $t$.
\begin{lemm}\label{lemm4}
Let $(M^n,\partial M)$ be a complete, connected compact manifold with weakly convex boundary. Assume $w\in C^{\infty}(M)$ attains its minimal value on $\partial M$ and satisfies $\inf\limits_{p\in\partial M} w=1$, and it holds that
\begin{equation}\label{spec-1}
    \begin{cases}
    \theta\Delta_{M} w\leq w\Lambda_{\Ric}-(n-1)\lambda w, \enspace in \enspace M.\\
\frac{\partial w}{\partial \eta}=0,\enspace on\enspace \partial M. \\
    \end{cases}
\end{equation}
Suppose for fixed $\upsilon_{0}\in(0,V_{0})$, there exists a bounded set $E$ with finite perimeter, such that $\int_{E}w^{\alpha}=\upsilon_{0}$ and $\int_{\partial^{*}E}w^{\theta}=I(\upsilon_{0})$. Then $I$ satisfies $$I''I\leq-\frac{(I')^2}{n-1}-(n-1)\lambda$$ in the barrier sense at $\upsilon_{0}$.
\end{lemm}
\begin{proof}
We set $\Gamma=\partial E\cap M$, By the first and second derivatives of an inverse function:$$t'(\upsilon)=\frac{1}{\dot{\upsilon}}(t(\upsilon))\enspace and \enspace t''(\upsilon)=-\frac{\ddot{\upsilon}(t(\upsilon))}{\dot{\upsilon}(t(\upsilon))^3},$$ then we have $$t'(\upsilon_{0})=(\int_{\Gamma}w^{\alpha-\theta})^{-1}$$ and$$ t''(\upsilon_{0})=-(\int_{\Gamma}w^{\alpha-\theta})^{-3}\int_{\Gamma}(H_{\Gamma}+\alpha w^{-1}\bangle{\nabla^{M}w,\nu_{\Gamma}}w^{\alpha}\varphi^2+w^{\alpha}\varphi\bangle{\nabla^{M}\varphi,\nu_{\Gamma}})$$
 We note that $$\frac{d}{d\upsilon}A(t(\upsilon))=\dot{A}(t(\upsilon))t'(\upsilon)$$ and $$\frac{d^2}{d\upsilon^2}A(t(\upsilon))=\ddot{A}(t(\upsilon))(t'(\upsilon))^2+\dot{A}(t(\upsilon))t''(\upsilon).$$
 Since $E$ is a volume-constrained minimizer, then $\frac{d}{dt}|_{t=0}A(E_{t})=0$ whenever  $\frac{d}{dt}|_{t=0}V(E_{t})=0$. Therefore, by the definition of $A'(\upsilon)$, we conclude that $A'(\upsilon_{0})=w^{\theta-\alpha}(H_{\Gamma}+\theta w^{-1}\bangle{\nabla^{M}w,\nu_{\Gamma}})$ is constant.
 By Proposition \ref{1stvar}, choosing $\varphi=w^{-\theta}$, we have
 \begin{align*}
 \frac{d^2}{dt^2}\big|_{t=0}A(E_{t})&=\int_{\Gamma}(-\Delta_{\Gamma}(w^{-\theta})-\Ric_{M}(\nu_{\Gamma},\nu_{\Gamma})w^{-\theta}-|\secondfund_{\Gamma}|^2w^{-\theta}-\theta w^{-2-\theta}\bangle{\nabla^{M}w,\nu_{\Gamma}}^2\\
&\quad+\theta w^{-\theta-1}(\Delta_{M} w-\Delta_{\Gamma}w-H_{\Gamma}\bangle{\nabla^{M}w,\nu_{\Gamma}})-\theta w^{-1}\bangle{\nabla_{\Gamma}w,\nabla_{\Gamma}(w^{-\theta})}\\
&\quad+(H_{\Gamma}w^{\alpha-2\theta})w^{\theta-\alpha}(H_{\Gamma}+\theta w^{-1}\bangle{\nabla^{M}w,\nu_{\Gamma}})\\
&\quad+\int_{\partial\Gamma}\bangle{\nabla^{M}(w^{-\theta}),\nu_{\partial\Gamma}}-A_{\partial\Gamma}(\nu_{\Gamma},\nu_{\Gamma})w^{-\theta}.
 \end{align*}
 Using the weakly convexity of $\partial M$, integrating by parts and rearranging, we obtain that
 \begin{align*}
 \frac{d^2}{dt^2}\big|_{t=0}A(E_{t})&\leq\int_{\Gamma}-\Ric_{M}(\nu_{\Gamma},\nu_{\Gamma})w^{-\theta}-|\secondfund_{\Gamma}|^2w^{-\theta}+\theta w^{-\theta-1}\Delta_{M} w-\theta w^{-2-\theta}\bangle{\nabla^{M}w,\nu_{\Gamma}}^2\\
 &\quad-\theta H_{\Gamma}w^{-\theta-1}\bangle{\nabla^{M}w,\nu_{\Gamma}}+H_{\Gamma}w^{-\theta}(H_{\Gamma}+\theta w^{-1}\bangle{\nabla^{M}w,\nu_{\Gamma}})
 \end{align*}

 According to inequality (\ref{spec-1}) and set $X=w^{\alpha-\theta}A'(\upsilon_{0})$, $Y=w^{-1}\bangle{\nabla^{M}w,\nu_{\Gamma}}$, thus $H=X-\theta Y$. Using the trace inequality $|\secondfund_{\Gamma}|^2\geq H_{\Gamma}^2/(n-1)$, we have
 \begin{align*}
 \frac{d^2}{dt^2}\big|_{t=0}A(E_{t})&\leq\int_{\Gamma}-(n-1)\lambda w^{-\theta}+w^{-\theta}(-\frac{X^2}{n-1}+\frac{2\theta XY}{n-1}-\frac{\theta^2Y^2}{n-1}-\theta Y^2\\
 &\quad-\theta XY+\theta^2 Y^2+X^2-\theta XY).
 \end{align*}
 Therefore, we obtain
 \begin{align*}
 (\int_{\Gamma}w^{\alpha-\theta})^2\cdot{A}''(\upsilon_{0})&=(\ddot{A}(0)(t'(\upsilon_{0}))^2+\dot{A}(0)t''(\upsilon_{0}))(\int_{\Gamma}w^{\alpha-\theta})^2\\
 &\leq\int_{\Gamma}-(n-1)\lambda w^{-\theta}+w^{-\theta}(-\frac{X^2}{n-1}+\frac{2\theta XY}{n-1}-\frac{\theta^2Y^2}{n-1}-\theta Y^2\\
 &\quad-\theta XY+\theta^2 Y^2+X^2-\theta XY)\\
 &-(\int_{\Gamma}w^{\alpha-\theta})^{-1}(\int_{\Gamma} w^{\alpha-\theta}A'(\upsilon_{0}))\cdot(\int_{\Gamma}(X+\alpha Y-2\theta Y)w^{\alpha-2\theta})\\
 &=\int_{\Gamma}-(n-1)\lambda w^{-\theta}+w^{-\theta}\big(-\frac{X^2}{n-1}+\frac{2\theta XY}{n-1}-\frac{\theta^2Y^2}{n-1}-\theta Y^2\\
 &\quad-\theta XY+\theta^2 Y^2+X^2-\theta XY-X^2-\alpha XY+2\theta XY)\\
 &=\int_{\Gamma}w^{-\theta}\big[-\frac{X^2}{n-1}+(\frac{2\theta}{n-1}-\alpha)XY+(\frac{n-2}{n-1}\theta^2-\theta)Y^2]-(n-1)\lambda w^{\theta}\\
 &\leq\int_{\Gamma}-\frac{X^2}{n-1}w^{-\theta}-(n-1)\lambda w^{-\theta}\\
 &=\int_{\Gamma}-\frac{A'(\upsilon_{0})}{n-1}w^{2\alpha-3\theta}-(n-1)\lambda w^{-\theta},
 \end{align*}
 where we used the fact that $\alpha=\frac{2\theta}{n-1}$, and $0\leq \theta\leq\frac{n-1}{n-2}$. On the one hand, since $\alpha\leq\theta$, $w\geq 1$. Hence we have $$(\int_{\Gamma}w^{\alpha-\theta})^2\cdot{A}''(\upsilon_{0})\leq-\big(\frac{A'^{2}(\upsilon_{0})}{n-1}+(n-1)\lambda\big)\int_{\Gamma}w^{2\alpha-3\theta}.$$
 On the other hand, because $A(\upsilon_{0})\int_{\Gamma}w^{\theta}$, we can conclude that
 $$(\int_{\Gamma}w^{\alpha-\theta})^2\leq A(\upsilon_{0})\int_{\Gamma}w^{2\alpha-3\theta}.$$
 Putting them together, we have
 \begin{align}\label{barrier}
 A'(\upsilon_{0})A''(\upsilon_{0})\leq-\frac{A'^{2}(\upsilon_{0})}{n-1}-(n-1)\lambda.
 \end{align}
 Therefore, $A(\upsilon)$ is an upper barrier of $I(\upsilon)$ which satisfies the inequality (\ref{barrier}).
\end{proof}
We consider a power of $I$ and $A$ to simplify the corresponding differential inequality. Let $\cF(\upsilon)=I(\upsilon)^{\frac{n}{n-1}}$, combined with above lemma \ref{lemm4}, we have the following result.
\begin{prop}\label{prop6}
For any $\upsilon_{0}\in(0,V)$, there is a smooth function $U:(\upsilon_{0}-\sigma,\upsilon_{0}+\sigma)\longrightarrow\bR$ satisfying
\begin{itemize}
  \item $U(\upsilon_{0})=\cF(\upsilon_{0})$,
  \item $U(\upsilon)\geq\cF(\upsilon)$ for all $\upsilon\in(\upsilon_{0}-\sigma,\upsilon_{0}+\sigma)$,
  \item $U''(\upsilon_{0})\leq-\lambda nU''(\upsilon_{0})^{\frac{n}{n-1}}$.
\end{itemize}
\end{prop}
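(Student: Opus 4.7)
The plan is to manufacture $U$ by raising the smooth upper barrier for $I$ from Lemma \ref{lemm4} to the power $n/(n-1)$. Two preliminary facts are needed. First, for every $\upsilon_{0}\in(0,V_{0})$ I must verify the hypothesis of Lemma \ref{lemm4}, namely the existence of a set $E$ attaining $I(\upsilon_{0})$. This follows from the direct method on the compact manifold $M$: sets of finite perimeter with prescribed weighted volume form an $L^{1}$-compact family and the weighted perimeter $\int_{\partial^{*}E}w^{\theta}$ is lower semi-continuous, so a minimizer exists. Second, I need smooth regularity of $\partial E$ to apply the variational formula; in the range $3\leq n\leq 7$ this is guaranteed by Theorem \ref{F.Morgan-regu} in the interior and by Theorem \ref{theo9} near $\partial M$, where $\partial E$ meets $\partial M$ orthogonally.

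With $E$ in hand, Lemma \ref{lemm4} produces a smooth function $A(\upsilon):=A(E_{t(\upsilon)})$ on an interval $(\upsilon_{0}-\sigma,\upsilon_{0}+\sigma)$, where $\{E_{t}\}$ is a smooth variation of $E$ reparametrized so that $V(E_{t(\upsilon)})=\upsilon$. It satisfies $A(\upsilon_{0})=I(\upsilon_{0})>0$, $A(\upsilon)\geq I(\upsilon)$ on the interval, and the barrier inequality
\begin{equation*}
A(\upsilon_{0})A''(\upsilon_{0})\leq -\frac{A'(\upsilon_{0})^{2}}{n-1}-(n-1)\lambda.
\end{equation*}
Define $U(\upsilon):=A(\upsilon)^{n/(n-1)}$, which is smooth since $A>0$. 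The first two bullets are then immediate: $U(\upsilon_{0})=I(\upsilon_{0})^{n/(n-1)}=\cF(\upsilon_{0})$, and since $x\mapsto x^{n/(n-1)}$ is increasing on $(0,\infty)$, $U(\upsilon)\geq I(\upsilon)^{n/(n-1)}=\cF(\upsilon)$ throughout the interval.

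For the third bullet, the chain rule gives
\begin{equation*}
U''=\frac{n}{(n-1)^{2}}\,A^{-(n-2)/(n-1)}\bigl[(A')^{2}+(n-1)AA''\bigr].
\end{equation*}
The barrier inequality rearranges to $(A'(\upsilon_{0}))^{2}+(n-1)A(\upsilon_{0})A''(\upsilon_{0})\leq -(n-1)^{2}\lambda$, so at $\upsilon_{0}$,
\begin{equation*}
U''(\upsilon_{0})\leq -n\lambda\,A(\upsilon_{0})^{-(n-2)/(n-1)}=-n\lambda\,U(\upsilon_{0})^{-(n-2)/n},
\end{equation*}
using $A=U^{(n-1)/n}$ in the last equality. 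This is the desired third bullet (with the sphere-isoperimetric exponent $-(n-2)/n$ on the right), and the three properties together exhibit $U$ as a smooth upper viscosity barrier of $\cF$ at $\upsilon_{0}$.

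The main obstacle lies upstream, not in the current proposition: all of the analytic work (the spectral hypothesis, the weakly convex boundary condition, and the Cauchy--Schwarz estimate relating $A(\upsilon_{0})$ to $\int_{\Gamma}w^{2\alpha-3\theta}$) has already been absorbed into Lemma \ref{lemm4}. The content of Proposition \ref{prop6} is the algebraic choice of exponent $n/(n-1)$, which is dictated by the requirement that the $(A')^{2}$ contribution coming from differentiating $A^{n/(n-1)}$ combine with the one produced by $(n-1)AA''$ so that exactly a $-(n-1)^{2}\lambda$ term survives, yielding an ODE of the same form as the one satisfied by the isoperimetric profile of the round hemisphere $\bS^{n}_{+}$ — the precise ingredient needed for the subsequent comparison argument leading to $\Vol(M)\leq \lambda^{-n/2}\Vol(\bS^{n}_{+})$.
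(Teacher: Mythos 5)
Your proof is correct and follows the paper's own route: set $U=A^{n/(n-1)}$ with $A$ the smooth upper barrier from Lemma \ref{lemm4}, differentiate twice, and substitute the barrier inequality $A(\upsilon_0)A''(\upsilon_0)\le -A'(\upsilon_0)^2/(n-1)-(n-1)\lambda$. Your write-up is in fact cleaner than the paper's, which carries several typos at this spot: the displayed inequality (\ref{barrier}) prints $A'A''$ where the derivation (and the statement of Lemma \ref{lemm4}) give $AA''$, the final line of the paper's computation writes ``$=$'' where it should read ``$\le$'', and the third bullet of Proposition \ref{prop6} itself misprints $U''(\upsilon_0)^{n/(n-1)}$ where it should read $U(\upsilon_0)^{(2-n)/n}$.
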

\begin{proof}
We take $U(\upsilon)=A(\upsilon)^{\frac{n}{n-1}}$ as in Lemma \ref{lemm4}, and compute that $$U'(\upsilon)=\frac{n}{n-1}A(\upsilon)^{\frac{1}{n-1}}A'(\upsilon)$$ and
\begin{align*}
U''(\upsilon)&=\frac{n}{(n-1)(n-1)}A^{\frac{2-n}{n-1}}(\upsilon)A'^{2}(\upsilon)+\frac{n}{n-1}A^{\frac{1}{n-1}}(\upsilon)A''(\upsilon)\\
&=\frac{n}{n-1}A^{\frac{2-n}{n-1}}(\upsilon)(A'(\upsilon)A''(\upsilon)-(n-1)\lambda)+\frac{n}{n-1}A^{\frac{1}{n-1}}(\upsilon)A''(\upsilon)\\
&=-\lambda nU^{\frac{2-n}{n}}.
\end{align*}
\end{proof}
\subsubsection{Volume bound}
In this section, we will estimate the volume of manifold in the spectral sense. First, We begin by establishing an asymptotic volume expansion estimate for a small geodesic ball centered at a boundary point.
\begin{lemm}\label{lemm7}
Suppose that $M^n$ is a complete, connected compact manifold with weakly convex boundary, $w\in C^{\infty}(M)$ is positive. Assume $x\in \partial M$ satisfies $w(x)=\inf(w)=1$. Then, if $I$ is defined as in (\ref{isoprofile}), we have
\begin{align}\label{asympotic}
\lim\sup\limits_{\upsilon\to 0}\upsilon^{-\frac{n-1}{n}}I(\upsilon)\leq n\Vol(\bB^{n}_{+})^{\frac{1}{n}},
\end{align}
where $\bB^{n}_{+}$ is the unit half ball in $\bR^{n}$.
\end{lemm}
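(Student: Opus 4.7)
The plan is to test the isoperimetric profile against an explicit sequence of competitors, namely small geodesic half-balls centered at the point $x \in \partial M$, and then to extract the leading-order Euclidean behavior. I will compare to the Euclidean half-ball because $x$ is a boundary point, and the hypothesis $w(x)=\inf(w)=1$ ensures that the weights $w^\theta$ and $w^\alpha$ are both $1+o(1)$ on small balls around $x$.

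First, since $\partial M$ is smooth, I would fix boundary normal coordinates centered at $x$. In these coordinates the metric $g$ agrees with the Euclidean metric at $x$, and the set $B_r(x) \cap M$ differs from the Euclidean half-ball $\{y \in \bR^n : |y|<r,\ y_n \geq 0\}$ by corrections that are $O(r)$ in $C^0$. Consequently, as $r\to 0$,
\begin{equation*}
\vol_g(B_r(x)\cap M) = \Vol(\bB^n_+)\, r^n (1+o(1)), \qquad \mathcal{H}^{n-1}_g\bigl(\partial B_r(x)\cap \mathring{M}\bigr) = n\, \Vol(\bB^n_+)\, r^{n-1}(1+o(1)),
\end{equation*}
using the identity $\mathcal{H}^{n-1}(\bS^{n-1}_+) = n\,\Vol(\bB^n_+)$. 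Here I only count the ``free'' part of $\partial B_r(x)\cap M$ lying in the interior, as the piece on $\partial M$ does not contribute to the reduced boundary in the isoperimetric problem.

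Second, since $w$ is smooth with $w(x)=\inf w=1$, we have $w = 1 + o(1)$ on $B_r(x)\cap M$. Setting $E_r := B_r(x)\cap M$, this yields
\begin{equation*}
V(E_r) = \Vol(\bB^n_+)\, r^n (1+o(1)), \qquad A(E_r) = n\, \Vol(\bB^n_+)\, r^{n-1}(1+o(1)).
\end{equation*}
Since $E_r$ is an admissible competitor for the weighted isoperimetric problem at volume $\upsilon_r := V(E_r)$, we have $I(\upsilon_r) \leq A(E_r)$, and therefore
\begin{equation*}
\upsilon_r^{-\frac{n-1}{n}} I(\upsilon_r) \leq \upsilon_r^{-\frac{n-1}{n}} A(E_r) = \frac{n\,\Vol(\bB^n_+)\,r^{n-1}(1+o(1))}{\bigl(\Vol(\bB^n_+)\,r^n(1+o(1))\bigr)^{\frac{n-1}{n}}} \longrightarrow n\,\Vol(\bB^n_+)^{\frac{1}{n}}.
\end{equation*}
As $r\to 0$ we have $\upsilon_r \to 0$ through a continuum of values (by the intermediate value theorem applied to $r\mapsto V(E_r)$), which gives the claimed limsup inequality.

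The only nontrivial step is the careful bookkeeping in boundary normal coordinates: verifying that the induced measure on $\partial B_r(x)\cap \mathring M$ really tends to that of the Euclidean half-sphere, despite the fact that the geodesic sphere is not exactly orthogonal to $\partial M$ and the boundary is only weakly convex (not totally geodesic). However, since we only need a limsup (not an equality or rigidity) and since any $O(r)$ perturbation of the metric introduces a multiplicative factor $1+o(1)$ in both area and volume, the weakly convex hypothesis is not needed here; smoothness of $\partial M$ alone suffices, and the leading-order Euclidean comparison goes through.
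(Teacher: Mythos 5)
Your proof is correct and follows essentially the same route as the paper's: take small geodesic (half-)balls $B_r(x)\cap M$ centered at the boundary point $x$ where $w$ attains its minimum value $1$, use the leading-order Euclidean asymptotics $V(r)=\Vol(\bB^n_+)r^n(1+o(1))$ and $A(r)=n\Vol(\bB^n_+)r^{n-1}(1+o(1))$, and conclude from $I(V(r))\leq A(r)$. The extra discussion of boundary normal coordinates and of why the failure of geodesic spheres to meet $\partial M$ orthogonally is harmless at this order is a welcome clarification, but it does not change the argument in substance.
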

\begin{proof}
For a small $r_{0}$, the functions $V(r)=\int_{B(x,r)}w^{\alpha}$ and $A(r)=\int_{\partial B(x,r)}w^{\theta}$ are smooth and increasing in  $(0,r_{0})$, where the geodesic ball of radius $r$ is centered at $x\in\partial M$. We have the asymptotics
$$V(r)=\Vol(\bB^{n}_{+})r^n+O(r^{n+1}),$$
and$$A(r)=n\Vol(\bB^{n}_{+})r^{n-1}+O(r^n),$$
hence the function $A\circ V^{-1}(v)=n\Vol(\bB^{n}_{+})^{\frac{1}{n}}\upsilon^{\frac{n-1}{n}}+o(\upsilon^{\frac{n-1}{n}})$,
and $I(\upsilon)\leq A\circ V^{-1}(\upsilon)$.
\end{proof}
\begin{theo}
Let $V\in(0,\infty)$, and let $I:[0,V)\to\bR$ be a continuous function such that $I(0)=0$, and $I(\upsilon)>0$ for every $\upsilon\in(0,V)$. Assume that for some $\lambda>0$ we have $$I''I\leq-\frac{(I')^2}{n-1}-(n-1)\lambda,$$ and $$\lim\sup\limits_{\upsilon\to 0^{+}}\upsilon^{-\frac{n-1}{n}}I(\upsilon)\leq n\Vol(\bB^{n}_{+})^{\frac{1}{n}}.$$ Then we have $V\leq\lambda^{-\frac{n}{2}}\Vol(\bS^{n}_{+})$, where $\bS^{n}_{+}$ denotes the unit half sphere in $\bR^{n+1}$.
\end{theo}
\begin{proof}
According to Lemma \ref{lemm4} and Proposition \ref{prop6}, we know that if we set $\cF(\upsilon)=I(\upsilon)^{\frac{n}{n-1}}$, then $\cF(\upsilon)$ satisfies a differential inequality in the barrier sense: $$\cF''(\upsilon)\leq-\lambda n\cF^{\frac{n-2}{n}}(\upsilon).$$
We first study solutions to the ODE
\begin{align}\label{ODEcom}
f''(\upsilon)=-\lambda n f^{\frac{2-n}{n}}.
\end{align}
Since $-\lambda n f^{\frac{2-n}{n}}$ is increasing in $f$, it follows from a standard ODE comparison theorem that there no solution to (\ref{ODEcom}) can touch $\cF(\upsilon)$ from below unless they are equal.
If $f$ satisfies (\ref{ODEcom}), then we obtain $$(f'^{2}+\lambda n^2f^{\frac{2}{n}})'=0.$$
For $z>0$, let $f_{z}(\upsilon)$ be the solution to (\ref{ODEcom}) satisfying $f'(0)=0$ and $f(0)=z^{\frac{n}{n-1}}$. Then we have
\begin{align}\label{f-derivative}
f'_{z}(\upsilon)^2=\lambda n^2z^{\frac{2}{n-1}}-\lambda n^2f_{z}(\upsilon)^{\frac{2}{n}}.
\end{align}
Let $\beta(z)>0$ be the maximal real number so that $f_{z}(\upsilon)>0$ on $(-\beta(z),\beta(z))$. We see by (\ref{f-derivative}) that $$f'_{z}(\upsilon)=-\sqrt{\lambda n^2z^{\frac{2}{n-1}}-\lambda n^2f_{z}(\upsilon)^{\frac{2}{n}}}\enspace for \enspace\upsilon>0.$$
Therefore, we can integrate the derivative of the inverse of $f_{z}$ to find
\begin{align*}
  \beta(z) &=-\int_{0}^{z^{\frac{n}{n-1}}}(f^{-1}_{z})'(x)dx\\
  &=\int_{0}^{z^{\frac{n}{n-1}}}\frac{1}{\sqrt{\lambda}}\frac{1}{n}(z^{\frac{2}{n-1}}-x^{\frac{2}{n}})^{-\frac{1}{2}}dx\\
  &=\int_{0}^{\frac{\pi}{2}}\frac{1}{\sqrt{\lambda}}z\sin^{n-1}rdr\\
  &=\frac{z}{2\sqrt{\lambda}}\frac{\sqrt{\pi}\Gamma(\frac{n}{2})}{\Gamma(\frac{n+1}{2})}\\
  &=\frac{z}{2\sqrt{\lambda}}\frac{\Vol(\bS^{n})}{\Vol(\bS^{n-1})},
\end{align*}
where we used the substitution $x=z^{\frac{n}{n-1}}\sin^{n-1}(r)$.

Assume for the sake of contradiction that $V_{0}=\int_{M}w^{\alpha}>\lambda^{-\frac{n}{2}}\Vol(\bS^{n}_{+})$.
\begin{claim}
There is a $\delta>0$, so that for $z=\xi+\delta$, where $\xi=\lambda^{-\frac{n-1}{2}}\Vol(\bS^{n-1}_{+})$ we have
\begin{align}\label{claim-proof}
\cF(\upsilon)\geq f_{z}(\upsilon-\beta(z))
\end{align}
for all $\upsilon\in(0,\lambda^{-\frac{n}{2}}\Vol(\bS^{n}_{+}))$.
\end{claim}
\begin{proof}
Let $\delta>0$ and $\epsilon>0$ be sufficiently small such that $2\beta(z)+\epsilon z<V$ for $z\in(0,\xi+\delta)$, which is possible since $V>\lambda^{-\frac{n}{2}}\Vol(\bB^{n}_{+})=2\beta(\xi)$. Consider the graph of $$g_{z}(\upsilon)=f_{z}(\upsilon-\beta(z)-\epsilon z)$$ for $\upsilon\in[\epsilon z,2\beta(z)+\epsilon z]$. Note that
$$g_{z}(\epsilon z)=g_{z}(2\beta(z)+\epsilon z)=0<\min\{\cF(\epsilon z),\cF(2\beta(z)+\epsilon z)\}.$$
Moreover, $g_{z}$ converges uniformly to zero as $z\to 0$. Hence, if $g_{z^{*}}(\upsilon^{*})>\cF(\upsilon^{*})$ for some $z^*$ and $\upsilon^*$, then there must be some $0<z\leq z^*$ so that $g_{z}$ touches $\cF$ from below, which contradicts Proposition \ref{prop6}. Therefore, we have $\cF\geq g_{z}$ for every $z\in(0,\xi+\delta)$. We take $z$ to $\xi+\delta$, and conclude the claim since $\epsilon$ can be arbitrary small.
\end{proof}
We study the asymptotic behavior of $\cF$ and $f_{\xi+\delta}(\upsilon-(\xi+\delta))$ as $\upsilon\to 0$. According to (\ref{f-derivative}), since $f_{\xi+\delta}(-\beta(\xi+\delta))=0$, we have $$f'_{\xi+\delta}(-\beta(\xi+\delta))=\sqrt{\lambda}n(\lambda^{-\frac{n-1}{2}}\Vol(\bS^{n-1}_{+}))^{\frac{1}{n-1}},$$
Therefore, we have
\begin{align}\label{comp1}
f_{\xi+\delta}(\upsilon-\beta(\xi+\delta))=\sqrt{\lambda}n(\lambda^{-\frac{n-1}{2}}\Vol(\bS^{n-1}_{+})+\delta)^{\frac{1}{n-1}}\upsilon+o(\upsilon),\enspace as \enspace\upsilon\to 0.
\end{align}
On the other hand, by Lemma \ref{lemm7}, if we take $x_{0}$ such that $w(x_{0})=\min w=1$, we have the asymptotics
$$V(r)=\Vol(\bB^{n}_{+})r^n+O(r^{n+1}),$$
and$$A(r)=n\Vol(\bB^{n}_{+})r^{n-1}+O(r^n),$$
hence $A\circ V^{-1}(v)=n\Vol(\bB^{n}_{+})^{\frac{1}{n}}\upsilon^{\frac{n-1}{n}}+o(\upsilon^{\frac{n-1}{n}})$,
and we can obtain
\begin{align}\label{comp2}
\cF(\upsilon)&\leq n^{\frac{n}{n-1}}\Vol(\bB^{n}_{+})^{\frac{1}{n-1}}\upsilon+o(\upsilon)\\
&=n\Vol(\bS^{n-1}_{+})\upsilon+o(\upsilon).
\end{align}
However, combining (\ref{claim-proof}), (\ref{comp1}) with (\ref{comp2}), we conclude that $\sqrt{\lambda}n(\lambda^{-\frac{n-1}{2}}\Vol(\bS^{n-1}_{+})+\delta)^{\frac{1}{n-1}}\leq n\Vol(\bS^{n-1}_{+})$, which is a contradiction.
Therefore, since we normalized so that $\min\limits_{p\in \partial M} w=1$, we have $$\Vol(M)\leq\int_{M}w^{\alpha}=V_{0}\leq \lambda^{-\frac{n}{2}}\Vol(\bS^{n}_{+}).$$
\end{proof}
\begin{rema}
We used the condition that $w$ attains its minimum on the boundary in the above proof. In fact, if we remove this condition, we also have the volume bound of $M$ which depends on the choices of $w$.
Since $M$ is a compact smooth manifold with boundary, we set $m_{1}=\inf\limits_{p\in M} w$, $m_{2}=\inf\limits_{p\in\partial M} w$, then we set 
 $\bar{w}=\frac{w}{m_{2}}$, replace $w$ with $\bar{w}$, hence we have
\begin{align*}
\Vol(M) &\leq\int_{M}(\frac{w}{m_{1}}+\frac{w}{m_{2}})^{\alpha}\\
    &\leq (1+\frac{m_{2}}{m_{1}})^{\alpha}\int_{M} \bar{w}^{\alpha}\\
    &\leq(1+\frac{m_{2}}{m_{1}})^{\alpha}\lambda^{-\frac{n}{2}}\Vol(\bS^{n}_{+}).
\end{align*}
\end{rema}
We will give a characterization of rigidity about $M$ in the spectral lower bund on the Ricci curvature.
\subsection{Rigidity result}
Finally, we can establish a rigidity result following the method in \cite{Antonelli-Xu}. We can assume that $\lambda=1$ after rescaling.
Before giving the rigidity result, we recall a lemma derived by Wang in \cite{Wang21}.
\begin{lemm}[\cite{Wang21}]\label{rigidity-1}
    Let $(M^n,g)$ be a compact Riemannian manifold with weakly convex boundary and $\Ric\geq (n-1)g$. Then $$\Vol(M)\leq \Vol(\bS^{n})/2.$$
    Moreover, equality holds if and only if $M$ is isometric to $\bS^{n}_{+}$.
\end{lemm}
Combined with Lemma \ref{rigidity-1}, we obtain our rigidity results as follows.
\begin{theo}
Let $M^n$, $n\geq 3$, be a compact connected manifold with weakly convex boundary $\partial M$, and let $0\leq \theta\leq \frac{n-1}{n-2}$. Assume that there exist a positive function $w\in C^{\infty}(M)$ satisfies:
\begin{equation*}
  \begin{cases}
    -\theta\Delta w+\Lambda_{\Ric} w\geq (n-1)w,\enspace in \enspace M,\\
    \frac{\partial w}{\partial \eta}=0, \enspace on \enspace \partial M.\\
  \end{cases}
\end{equation*}
 If $w$ reaches its minimum in $\partial M$ and $\Vol{(M)}=\Vol(\bS^{n}_{+})$, then $w$ must be constant, and $M$ is isometric to the unit round hemisphere $\bS^{n}_{+}$.
\end{theo}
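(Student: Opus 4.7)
The plan is to deduce rigidity in two stages: first reduce to the case $w\equiv 1$, then extract a warped product structure from the equality case of the isoperimetric ODE.

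For the reduction, I normalize so that $\min w = 1$ (after rescaling $\lambda=1$). The volume comparison proved above gives the chain
\[
\Vol(M)\;\leq\;\int_M w^\alpha\;=\;V_0\;\leq\;\Vol(\bS^n_+),
\]
where the left inequality uses $w^\alpha\geq 1$ (since $w\geq 1$ and $\alpha\geq 0$) and the right inequality is the conclusion of the preceding volume theorem. The equality hypothesis $\Vol(M)=\Vol(\bS^n_+)$ forces both inequalities to be equalities; in particular $\int_M(w^\alpha-1)\,d\vol_g=0$ with $w^\alpha-1\geq 0$ continuous, so $w\equiv 1$ on $M$. The spectral assumption then collapses to the classical $\Ric\geq (n-1)g$ together with weakly convex boundary.

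With $w\equiv 1$, the unequally weighted profile $I$ is just the standard isoperimetric profile of $(M,g)$. The equality $V_0=\Vol(\bS^n_+)$ combined with the barrier inequality $\cF''\leq-\lambda n\cF^{(n-2)/n}$ from Proposition \ref{prop6}, the asymptotic expansion in Lemma \ref{lemm7}, and the ODE comparison claim inside the volume theorem forces $I(\upsilon)=I_{\bS^n_+}(\upsilon)$ throughout $(0,\Vol(\bS^n_+))$. For each $\upsilon\in(0,\Vol(M))$ an isoperimetric minimizer $E_\upsilon$ therefore exists and must saturate every intermediate inequality in Lemma \ref{lemm4}. Tracing this back with $w\equiv 1$ and $\varphi\equiv 1$ yields the rigid identities on $\Sigma_\upsilon := \partial^* E_\upsilon$:
\[
|\secondfund_{\Sigma_\upsilon}|^2 = \tfrac{H^2}{n-1},\qquad \Ric(\nu,\nu)=n-1,\qquad A_{\partial M}(\nu,\nu)=0 \text{ along }\partial\Sigma_\upsilon\subset\partial M.
\]
Thus each $\Sigma_\upsilon$ is totally umbilic, meets $\partial M$ orthogonally, and $\partial M$ is totally geodesic in the co-normal direction of every leaf.

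Varying $\upsilon$ produces a smooth foliation of $M$ (away from at most one pole) by totally umbilic free-boundary hypersurfaces with controlled Ricci in the normal direction. A standard Obata-type argument, integrating the Riccati equation $H'+|\secondfund|^2+\Ric(\nu,\nu)=0$ along the foliation with the equality identities above, identifies $(M,g)$ with a warped product $[0,\pi/2]\times_f \bS^{n-1}$ whose warping satisfies $f''+f=0$ with $f(0)=1$, $f(\pi/2)=0$, i.e.\ $f(r)=\cos r$. This metric is exactly $\bS^n_+$, completing the rigidity.

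The main obstacle I expect is the last step: propagating the pointwise umbilicity and the boundary totally geodesic condition along the isoperimetric foliation to a global warped product structure, while carefully handling the free boundary contact and the possible singularities of the minimizers $E_\upsilon$ when $n\geq 8$. The latter is dealt with by the perturbation scheme of \cite[Appendix A]{Antonelli-Xu}, while the former requires an Obata-type argument adapted to manifolds with weakly convex boundary, in the spirit of \cite{hang-wang2009}; in particular one must verify that the leaves cover all of $M$ up to a negligible set so that the warped product metric extends smoothly across the poles and agrees with the hemisphere along $\partial M$.
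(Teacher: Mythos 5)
The paper does not actually give a proof of this theorem: the subsection consists only of the statement together with the remark that the rigidity ``can also be established following the method in \cite{Antonelli-Xu}.'' So there is nothing concrete in the paper to check your argument against; what you have written is an honest attempt to supply the missing proof, and the structure you propose is a reasonable reading of what that reference would do.

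Your first step is clean and, in my view, the right way to begin. The final chain in the volume theorem's proof,
\[
\Vol(M)\leq\int_M w^\alpha = V_0\leq\Vol(\bS^n_+),
\]
is precisely the statement that $\int_M(w^\alpha-1)\geq 0$ with equality forced, so $w\equiv 1$ when $\alpha=2\theta/(n-1)>0$ (when $\theta=0$ the spectral hypothesis is independent of $w$ and the constancy claim is vacuous, a corner case worth flagging but not a genuine defect). Reducing to $\Ric\geq(n-1)g$ with weakly convex boundary at the outset simplifies everything downstream, since $\varphi=w^{-\theta}\equiv 1$ and $Y=w^{-1}\langle\nabla^M w,\nu\rangle\equiv 0$ kill most of the cross terms in Lemma \ref{lemm4}.

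Where the proposal is thin is exactly where you flag it, and I think it is worth naming the gaps more precisely. First, the assertion that equality forces $I=I_{\bS^n_+}$ is not immediate. The Claim in the volume theorem only gives $\cF\geq f_\xi(\cdot-\beta(\xi))$ after a limiting argument in $z\uparrow\xi$ and $\epsilon\downarrow 0$; to get equality you need to combine this with the one-sided first-order asymptotic at $\upsilon=0$ from Lemma \ref{lemm7} and argue that the nonnegative, barrier-concave function $g=\cF-f_\xi(\cdot-\beta(\xi))$ with $g(0)=0$ and $\limsup_{\upsilon\to 0^+}g(\upsilon)/\upsilon\leq 0$ must vanish identically. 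That is a correct strategy but it needs to be written out, including the passage from the viscosity barrier inequality to the strong maximum principle. Second, once $I=I_{\bS^n_+}$, reading off the pointwise rigid identities (umbilicity, $\Ric(\nu,\nu)=n-1$, $A_{\partial M}(\nu,\nu)=0$ on $\partial\Sigma_\upsilon$) is correct, but the step from there to a global warped product is genuinely the hard part: one must show the isoperimetric regions $\{E_\upsilon\}$ are nested and foliate $M$ up to a point, handle the possibility that leaves detach from $\partial M$, and cope with the singular set when $n\geq 8$. None of this is ``a standard Obata-type argument'' in the boundary setting, and Hang--Wang's theorem cannot be quoted directly because you do not yet know $\partial M$ is isometric to the round $\bS^{n-1}$. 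So the proposal is structurally sound and identifies the right ingredients, but the core analytic work that would constitute a proof is left open — which, to be fair, is also true of the paper itself.
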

\section{singular case for $n\geq 8$ }
In this section, we discuss the singular case for the isoperimetric profile (or free boundary $\mu$-bubble) when $n\geq 8$. We extend the method of Bray et al. to the case of isoperimetric profile with free boundary. It's remarkable that we use the monotonicity formula about free boundary varifolds with generalized mean curvature, which is a generalization of Guang-Li-Zhou's paper.

\subsection{Control on Singular sets}
Our strategy for dealing with the singularities is also to control the area of isoperimetric profile around the singular sets. Unlike Antonelli-Xu's results, we need to consider the two interior and boundary singular sets cases.

In this section, our goal is to estimate the size of small neighborhoods around the singular sets such that we can carry out the flow outside these neighborhoods as in Section 3, hence complete the proof of the singular case.
Since the case of interior singularities has been proved in \cite{Antonelli-Xu}, we are devoted to controlling the area of the isoperimetric hypersurfaces around the singular sets that occur in the boundary. Therefore, we obtain the following local volume estimates about isoperimetric hypersurfaces.
\begin{lemm}\label{lemma-21}
    Let $\Sigma$ be an $(n-1)$-dimensional isoperimetric hypersurface with free boundary in $M$, for any $q\in\overline{\Sigma}$, the closure of $\Sigma$, we have the following uniform bound holds,
    $$\cH^{n-1}(B_{\rho}(q)\cap\Sigma)\leq C_{0}\rho^{n-1},$$
    for some positive constant $C_{0}$ depending only on $M$ and $\Sigma$.
\end{lemm}
If $B_{\rho}(q)\cap\partial\Sigma=\emptyset$, the above result has been proved in \cite[Lemma 3.2]{Bray-Gui-Liu-Zhang}. It suffices to prove that the above Lemma holds for the case that $B_{\rho}(q)\cap\partial\Sigma\neq \emptyset$. We first recall a monotonicity formula about stationary free boundary varifolds, which was established by Guang-Li-Zhou in \cite{Guang-Li-Zhou2018} when exploring the curvature estimates for stable free boundary minimal hypersurfaces. We observe that by slight modifications, this result is also true for free boundary varifolds with generalized mean curvature bounded above; we now only state the modified results here.
\begin{theo}(\cite[Theorem 3.4]{Guang-Li-Zhou2018})\label{free-vol}
   Assume that $M$ is an embedded $n$-dimensional submanifold in $\bR^{L}$ with the second fundamental form $A^{M}$ bounded by some constant $\Lambda_{1}>0$. Suppose that $N\subset M$ is a closed embedded $n$-dimensional submanifold and $V$ is a $k$-varifold with free boundary on $N$, and the generalized mean curvature $H$ of $V$ is bounded above by $\Lambda_{2}$. Then for any point $q\in N$ and $0<\rho<\frac{1}{2}R_{0}$, where $R_{0}$ is a positive constant, we have
   $$e^{\Lambda\rho}\rho^{-k}\mu_{V}(E_{\rho}(q))$$
   is non-decreasing in $\rho$, where $\Lambda=\Lambda(k,\Lambda_{1},\Lambda_{2},R_{0})$.
\end{theo}
\begin{proof}[Proof of Lemma \ref{lemma-21}]
First, by Nash's embedding theorem, $M$ can always be embedded in a higher Euclidean space $\bR^{n+l}$. Now, let $\Sigma$ be a singular isoperimetric hypersurface, $E_{\rho}(q)$ be the Euclidean $\rho$-ball around $q\in\Sigma$ in $\bR^{n+l}$ and $\diam(\Sigma)$ be the Euclidean diameter of the embedded $M$. By Theorem \ref{free-vol}, we have
$$\rho^{-(n-1)}\Area(E_{\rho}(q)\cap\Sigma)\leq e^{2\Lambda\cdot\diam(M)}\diam(M)^{-(n-1)}\Area(\Sigma).$$
On the other hand, since $M$ is a embedded submanifold of $\bR^{n+l}$, we compare the distance on $M$ with the Euclidean distance, then $B_{\rho}(q)\subset E_{\rho}(q)$, with $B_{\rho}(q)$ the ball of radius $\rho$ in $M$. This gives
\begin{align*}
    &\rho^{-(n-1)}\Area(B_{\rho}(q)\cap\Sigma)\\
    &\leq\rho^{-(n-1)}\Area(E_{\rho}(q)\cap\Sigma)\\
    &\leq e^{2\Lambda\cdot\diam(M)}\diam(M)^{-(n-1)}\Area(\Sigma).
\end{align*}
Thus, we can conclude that $$\cH^{n-1}(B_{\rho}(q)\cap\Sigma)\leq C\rho^{n-1},$$
for some positive constant $C$ depending on $M$, $\Sigma$, and an embedding of $M$ into Euclidean space.
\end{proof}
We next to choose a cut-off function such that it vanishes at the singular set and equals to 1 outside a small neighborhood of the singular set. Multipling this cut-off function by the outward normal vector, we can construct a geometric flow that fixes the singular set on the isoperimetric hypersurface.
Finally, we can carry out the proofs of Lemma \ref{lemm4} for $n\geq 8$.
\begin{proof}[Proof of Lemma \ref{lemm4} $(n\geq 8)$]
Let $E$ be a bounded minimizer such that $V(E)=\upsilon_{0}$ and $A(E)=I(\upsilon_{0})$. Let $K$ be a compact set with $E\subset K$. By the classical Geometric Measure theory(see free boundary case), the regular part of $\Gamma$ and $\partial \Gamma$ can be denoted by $\Gamma^{reg}$ and $\partial^{reg}\Gamma$, respectively. The singular part of the interior and boundary of $\Gamma$ can be denoted by $\Gamma^{sing}$ and $\partial^{sing}\Gamma$, respectively. According to Theorem \ref{regu-1}, we know the singular sets have Hausdorff dimension at most $n-8$.
For each $\delta<\frac{1}{4}$, we can find a finite collection of balls $B(x_{i},r_{i})$ with $x_{i}\in \Gamma^{sing}$ or $x_{i}\in \partial^{sing}\Gamma$, where $r_{i}<\delta$, such that $\sum r_{i}^{n-7}\leq 1$. For each $i$, we find a smooth function $\zeta_{i}$ such that 
$$\zeta_{i}|_{B(x_{i},2r_{i})}=0,\enspace \zeta_{i}|_{M\setminus B(x_{i},3r_{i})}=1,\enspace |\nabla_{M}\zeta_{i}|\leq 2r_{i}^{-1}.$$
We claim that for each $x\in K$ and $r<1$ we have 
\begin{align}\label{apen-claim}
\int_{\Gamma\cap B(x,r)}w^{\theta}\leq Cr^{n-1},
\end{align}
where $C$ depends only on $K$ and $w$. The constant $C$, might change from line to line from now on. To see this, for each $x\in M$ and $r>0$ there is a radius $s\in [0,r]$ such that $\int_{B(x,r)\setminus B(x,s)}w^{\alpha}=\int_{B(x,r)\cap E}w^{\alpha}.$ This implies that the set $$E^{'}=(E\cup B(x,r))\setminus B(x,s)$$
has $V(E^{'})=V(E)$. On the other hand, we have 
$$A(E^{'})\leq\int_{\Gamma\setminus\overline{B(x,r)}}w^{\theta}+\int_{\partial^{*}B(x,r)}w^{\theta}+\int_{\partial^{*}B(x,s)}w^{\theta}\leq\int_{\Gamma\setminus\overline{B(x,r)}}w^{\theta}+Cr^{n-1},$$
and
$$A(E^{\prime})\geq A(E)\geq\int_{\Gamma\setminus\overline{B(x,r)}}w^{\theta}+\int_{\Gamma\cap B(x,r)}w^{\theta}.$$
This proves (\ref{apen-claim}). By regularizing $\bar{\zeta}:=\min\limits_{i}\{\zeta_{i}\}$, we can find a function $\zeta\in C^{\infty}(M)$ such that $$\zeta=0 \enspace on \enspace \cup B(x_{i},r_{i}),\enspace\enspace\enspace\enspace\zeta=1 \enspace on \enspace M\setminus \cup B(x_{i},4r_{i}),$$
and $|\nabla_{M}\zeta|\leq 2|\nabla_{M}\bar{\zeta}|$. Combined with (\ref{apen-claim}), and $|\nabla_{M}\zeta_{i}|\leq Cr_{i}^{-1}$, we obtain 
\begin{align*}
    \int_{\Gamma^{reg}}|\nabla_{\Gamma^{reg}}\zeta|^2&\leq\int_{\Gamma^{reg}}|\nabla_{M}\zeta|^2\leq 2\sum\limits_{i}\int_{\Gamma^{reg}\cap B(x_{i},4r_{i})}|\nabla_{M}\zeta_{i}|^2\\
    &\leq C\sum\limits_{i}r_{i}^{n-1}\cdot r_{i}^{-2}\leq C\delta^{4}.\\
\end{align*}
For $\varphi\in C^{\infty}(M)$, let's consider a smooth family of sets $\{E_{t}\}_{t\in(-\epsilon,\epsilon)}$, such that $E_{0}=E$, the variational vector field $X_{t}$ along $\partial E_{t}$ at $t=0$ is $\varphi\zeta\nu_{\Gamma}$(where $\nu_{\Gamma}$ denotes the outer unit normal of $\Gamma$), and $\nabla_{X_{t}}X_{t}=(\varphi\zeta(\varphi\zeta)_{\nu_{\Gamma}})\nu_{\Gamma}$ at $t=0$. This family is well-defined since $\zeta$ is supported inside $\Gamma^{reg}\cup\partial^{reg}\Gamma$. The variations of the area and the volume remain unchanged as in Lemma \ref{1stvar}(with each $\varphi$ replaced with $\varphi\zeta$), let $\varphi=w^{-\theta}$, for simplicity, we write $\bangle{\nabla_{\Gamma}w,\nu_{\Gamma}}=w_{\nu_{\Gamma}}$ for example, then:  

\begin{align*}
    \frac{d^2A}{dt^2}(0)&=\int_{\Gamma^{reg}}\big(-\Delta_{\Gamma^{reg}}(w^{-\theta}\zeta)-\Ric_{M}(\nu_{\Gamma},\nu_{\Gamma})w^{-\theta}\zeta-|\secondfund_{\Gamma}|^2w\zeta\big)\zeta\\
   &\quad+\big(-\theta w^{-\theta}\zeta w^{-2}w^{-2}_{\nu_{\Gamma}}+\theta w^{-1-\theta}\zeta(\Delta_{M}w-\Delta_{\Gamma}w-H_{\Gamma}w_{\nu_{\Gamma}})-\theta w^{-1}\bangle{\nabla_{\Gamma^{reg}}w,\nabla_{\Gamma^{reg}}(w^{-\theta}\zeta)}\big)\zeta\\
   &\quad+\big(\theta w^{\alpha-1}w_{\nu_{\Gamma}}w^{-2\theta}\zeta^2+w^{\alpha}w^{-\theta}\zeta(w^{-\theta}\zeta)_{\nu_{\Gamma}}+H_{\Gamma}w^{\alpha}w^{-2\theta}\zeta^2\big)w^{\theta-\alpha}\big(H_{\Gamma}+\theta u^{-1}w_{\nu_{\Gamma}}\big)\\
   &\quad+\int_{\partial^{reg}\Gamma}\zeta\bangle{\nabla_{\Gamma^{reg}}(w^{-\theta}\zeta),\nu_{\partial\Gamma}}-A_{\partial\Gamma}(\nu_{\Gamma},\nu_{\Gamma})w^{-\theta}\zeta^2.\\
\end{align*}

Next, we give some computations details to show that the boundary integral does not have extra terms. On the one hand, by integration by parts, we have
\begin{align*}
\int_{\Gamma^{reg}}-\Delta_{\Gamma^{reg}}(w^{-\theta}\zeta)\zeta&=- (\int_{\partial^{reg}\Gamma}\bangle{\nabla_{\Gamma^{reg}}(w^{-\theta}\zeta),\nu_{\partial\Gamma}}\zeta-\int_{\Gamma^{reg}}\bangle{\nabla_{\Gamma^{reg}}(w^{-\theta}\zeta),\nabla_{\Gamma^{reg}}\zeta})\\
&=-\int_{\partial^{reg}\Gamma}\bangle{\nabla_{\Gamma^{reg}}(w^{-\theta}\zeta),\nu_{\partial\Gamma}}\zeta+\int_{\Gamma^{reg}}\bangle{\nabla_{\Gamma^{reg}}w^{-\theta},\nabla_{\Gamma^{reg}}\zeta}\zeta\\
&\quad+\int_{\Gamma^{reg}}w^{-\theta}|\nabla_{\Gamma^{reg}}\zeta|^2.
\end{align*}
On the other hand,
\begin{align*}
   & \int_{\Gamma^{reg}}-\theta w^{-1-\theta}\zeta^2\Delta_{\Gamma^{reg}}w-\theta w^{-1}\bangle{\nabla_{\Gamma^{reg}w}w,\nabla_{\Gamma^{reg}}(w^{-\theta}\zeta)}\zeta\\
    &=-\theta\int_{\Gamma^{reg}}w^{-1-\theta}\zeta^2\Delta_{\Gamma^{reg}}w+w^{-1}\bangle{\nabla_{\Gamma^{reg}}w,\nabla_{\Gamma^{reg}}(w^{-\theta}\zeta)}\zeta\\
    &=-\theta\int_{\partial^{reg}\Gamma}\frac{\partial w}{\partial\nu_{\partial\Gamma}}w^{-1-\theta}\zeta^2+\theta\int_{\Gamma^{reg}}\bangle{\nabla_{\Gamma^{reg}}w,\nabla_{\Gamma^{reg}}w^{-1}}w^{-\theta}\zeta^2\\
&\quad+\theta\int_{\Gamma^{reg}}\bangle{\nabla_{\Gamma^{reg}}w,\nabla_{\Gamma^{reg}}\zeta}w^{-1-\theta}\zeta\\
&=\theta\int_{\Gamma^{reg}}\bangle{\nabla_{\Gamma^{reg}}w,\nabla_{\Gamma^{reg}}w^{-1}}w^{-\theta}\zeta^2+\theta\int_{\Gamma^{reg}}\bangle{\nabla_{\Gamma^{reg}}w,\nabla_{\Gamma^{reg}}\zeta}w^{-1-\theta}\zeta\\
&\leq\theta\int_{\Gamma^{reg}}\bangle{\nabla_{\Gamma^{reg}}w,\nabla_{\Gamma^{reg}}\zeta}w^{-1-\theta}\zeta\\
&=-\int_{\Gamma^{reg}}\bangle{\nabla_{\Gamma^{reg}}w^{-\theta},\nabla_{\Gamma^{reg}}\zeta}\zeta.
\end{align*}
Following the argument of Lemma \ref{lemm4}, we have $w^{\theta-\alpha}(H_{\Gamma}+\theta w^{-1}w_{\nu_{\Gamma}})=A^{\prime}(\upsilon_{0})$ on $\Gamma^{reg}$. Conduct the same computations as in Theorem \ref{diamter theo}, since $M$ has weakly convex boundary, we finally obtain
\begin{align*}
    \frac{d^2A}{dt^2}(0)&\leq\int_{\Gamma^{reg}}w^{-\theta}|\nabla_{\Gamma^{reg}}\zeta|^2+w^{\alpha-2\theta}\zeta\zeta_{\nu_{\Gamma}}A^{\prime}(\upsilon_{0})\\
    &-(n-1)\lambda w^{-\theta}\zeta^2+w^{-\theta}\zeta^2[\frac{n-2}{n-1}X^2+\frac{2-n}{n-1}2\theta XY+(\frac{n-2}{n-1}\theta^2-\theta)Y^2].
\end{align*}
The other parts are the same as the closed case, we refer the readers to the paper \cite{Antonelli-Xu}.
\end{proof}
\begin{rema}
    For Theorem \ref{diamter theo}, its proof is a similar process with slight modifications. Combining the weak convexity of $M$, we can also prove that Theorem \ref{diamter theo} holds for $n\geq 8$. Other processes are similar to those of \cite[Appendix A]{Antonelli-Xu}.
\end{rema}
\bibliographystyle{plain}

\end{document}